\documentclass[a4paper]{amsart} 

\usepackage{amsmath,amsthm,amssymb,amsfonts,mathrsfs,color,hyperref, mathtools, graphicx, enumitem, todonotes, soul}
\usepackage[]{geometry}

\theoremstyle{plain}
\begingroup
\newtheorem{theorem}{Theorem}
\newtheorem*{theorem*}{Theorem}
\newtheorem*{"theorem"}{``Theorem''}

\newtheorem{lemma}[theorem]{Lemma}
\endgroup

\theoremstyle{definition}
\begingroup

\endgroup

\theoremstyle{remark}
\begingroup
\newtheorem{remark}[theorem]{Remark}

\endgroup 

\numberwithin{equation}{section}
\newcommand{\R}{\mathbb R} 
\newcommand{\B}{{\mathcal B}}

\newcommand{\dist}{{\rm dist}}

\newcommand{\id}{\mathrm{id}}
\newcommand{\sign}{\mathrm{sign}}

\renewcommand{\H}{{\mathcal H}}

\renewcommand{\L}{{\mathcal L}}

\newcommand{\G}{{\mathcal G}}

\newcommand{\I}{{\mathcal I}}

\newcommand{\LRa} {\Leftrightarrow}
\newcommand{\Ra} {\Rightarrow}

\newcommand{\embeds}{\xhookrightarrow{\quad}}

\renewcommand{\d}{\,\mathrm{d}}

\newcommand{\dx}{\,\mathrm{d}x}

\newcommand{\dz}{\,\mathrm{d}z}
\newcommand{\ds}{\,\mathrm{d}s}
\newcommand{\dt}{\,\mathrm{d}t}

\newcommand{\eps}{\varepsilon}
\newcommand{\average}{{\mathchoice {\kern1ex\vcenter{\hrule height.4pt
width 6pt depth0pt} \kern-9.7pt} {\kern1ex\vcenter{\hrule
height.4pt width 4.3pt depth0pt} \kern-7pt} {} {} }}

 \allowdisplaybreaks

\xdefinecolor{dianablue}{rgb}{0.18,0.24,0.31}
\xdefinecolor{darkblue}{rgb}{0.1,0.1,0.7}
\xdefinecolor{darkgreen}{rgb}{0,0.5,0}
\xdefinecolor{darkgrey}{rgb}{0.35,0.35,0.35}
\xdefinecolor{darkorange}{rgb}{0.8,0.5,0}
\xdefinecolor{darkred}{rgb}{0.7,0,0}
\definecolor{darkgreen}{rgb}{0,0.6,0}

\definecolor{pittblue}{rgb}{0  0.2078  0.5804}

\begin{document}

\title[Neural Networks and Energy Gaps in SciML]{The Barron-Lipschitz Energy Gap and Depth Separation Phenomena in Scientific Machine Learning}

\author{Nima Rezaei}
\address{Nima Rezaei\\
Department of Mathematics\\
University of Pittsburgh\\
Pittsburgh, PA 15221, USA
}
\email{NIR100@pitt.edu}

\author{Stephan Wojtowytsch}
\address{Stephan Wojtowytsch\\
Department of Mathematics\\
University of Pittsburgh\\
Pittsburgh, PA 15221, USA
}
\email{s.woj@pitt.edu}

\date{\today}

\subjclass[2020]{46E35, 49Q20, 68T07, 74K20}
\keywords{Barron space, Lavrentiev gap, scientific machine learning, approximation theory}

\begin{abstract}
We illustrate in several examples that even neural networks of infinite width (specifically, Barron functions) may encounter substantial obstacles when used as a model class for problems in the calculus of variations. 
An instance of practical relevance concerns the bending, stretching and folding of a thin elastic shell with anchored or clamped boundary conditions where elastic energy could be reduced by folding along a circular line, but the neural networks can only describe straight folds along entire lines.
Conversely, we show that there is no gap between the energy that Barron functions and Lipschitz functions can achieve for a large class of integral first-order functionals.
\end{abstract}

\maketitle


\section{Introduction}
A rapidly growing corpus of literature illustrates a gap between the behavior of `high regularity' and `low regularity' solutions to various problems in fields as disparate as isometric embeddings \cite{nash1954c, nash1956imbedding, conti2010h}, fluid mechanics \cite{constantin1994onsager,  buckmaster2017onsager, de2019turbulence}, and the calculus of variations \cite{lavrentieff1927quelques, mania1934sopra, ball1982discontinuous, loewen1987lavrentiev, zhikov1987averaging, zhikov1995lavrentiev, foss2003lavrentiev, mazowiecka2017lavrentiev, balci2020new}. These differences are characterized by a transition from rigidity (high regularity) to flexibility (low regularity) in the first two fields and by a `Lavrentiev gap' in the third, where a physical energy functional can attain a strictly lower value in a class of less regular functions (for instance, releasing elastic tension by cavitation or fracture). Less obvious examples exist where the low regularity candidates remain continuous. In such situations, any numerical scheme based on `high regularity' approximating functions is likely to miss physically relevant candidates of low regularity. In particular, this includes numerical methods based on approximation by neural networks with sufficiently smooth activation functions, including all activation functions which allow for gradient-based loss minimization.

Crucially, the existence of a (finite, positive) {\em quantitative} gap follows from merely {\em qualitative} information on function regularity. In this work, we demonstrate that such a gap exists not just between classical smoothness classes, but also between the class of Lipschitz functions and the novel `Barron spaces', which are designed to model arbitrarily wide shallow neural networks with ReLU activation. Thus, the use of neural networks as a computational tool may introduce additional difficulty beyond the smoothness class in which the networks are naturally placed. 

More specifically, we construct variational problems which admit solutions in more general function classes (or classes of deeper neural networks in one practically relevant case), but not networks with a single hidden layer. In this sense, our results can also be considered through the lens of `depth separation' \cite{eldan2016power, telgarsky2016benefits, venturi2022depth} for scientific machine learning. Conversely, we demonstrate that for a large class of functionals, there exists no energy gap between the class of Lipschitz functions and Barron functions.
In particular, in those settings there is no dramatic qualitative jump in approximation power between shallow and deep networks, although quantitatively, approximation may be easier for particular architectures. Our main results are the following:

\begin{enumerate}
\item There is no Barron-Lipschitz gap for a large class of integral first-order functionals with or without imposed boundary condition (Theorem \ref{theorem no barron-lipschitz gap}).

\item There can be a Barron-Lipschitz gap for first-order functionals which include an $L^\infty$-norm or a sufficiently singular integrand in one dimension (Theorems \ref{theorem mania style example} and \ref{theorem second mania style example}). This can yield functions of almost minimal energy in Barron class to differ significantly from those in the Lipschitz class.

\item There is a Barron-Lipschitz energy gap for certain elastic energy functionals modeling the stretching, bending and folding of thin sheets (Theorem \ref{theorem bending}).
\end{enumerate}

These results contrast with more well-behaved energies for physics-informed neural network (PINN) PDE solvers, where solutions are generally not in Barron space if the boundary data would allow it, but can be approximated to high fidelity by Barron functions of low norm \cite{deepritz_regularity} -- see also \cite{vaishampayan2024solving} for analogous results with activation function ReLU$^k$ for $k\in \mathbb N$.

\section{A brief review of Barron spaces}

Barron spaces $\B$ are function spaces tailored specifically for approximation by shallow neural networks with ReLU activation. They contain not only all finite networks $f(x) = \sum_{i=1}^n a_i\,\sigma(w_i^Tx+b_i)$ with $\sigma(z) = \max\{z,0\}$, but also their `infinite width limits' $f(x) = \int_{\R^d\times \R} \sigma(w^Tx+b)\d\mu$ under a suitable norm bound assumption/moment bound assumption on the total variation measure $|\mu|$. These spaces have favorable behavior in terms of approximation by finite neural networks and estimation of integral quantities by few data points. In this article, we only require the following facts.

\begin{enumerate}
\item $\B(\R^d)$ is a Banach space for any $d$ and if $A\in \R^{d\times d}$ is a matrix, then $\|f\circ A\|_\B \leq \|f\|_\B\|A\|_{op}$.
\item Every Barron function is Lipschitz-continuous and $[f]_{Lip} \leq [f]_\B\leq \|f\|_\B$ \cite[Theorem 3.3]{barron_new} where $[f]_\B$ is the `Barron semi-norm' and $\|f\|_\B$ is the full Barron norm.
\item If $u\in C^\infty(\R^d)$, then for every $R>0$ there exists $u_R\in \B$ such that $u_R \equiv u$ in $B_R(0)$ \cite[Theorem 3.2]{barron_new}.
\item Any function $f \in \mathcal{B}(\mathbb{R}^d)$ can be written as a countable sum $f = \sum_{i=0}^\infty f_i,$
where
\begin{itemize}
\item $f_0 \in \B(\R^d)$ is $C^1$-smooth,
\item $f_i(x) = g_i(P_i x + b_i)$, where
	\item $P_i : \R^d \to \R^{k_i}$ is an orthogonal projection for $1 \leq k_i \leq d$ (i.e.\ $P_i P_i^T = I_{k_i \times k_i}$),
	\item $g_i \in \B(\R^{k_i})$ is $C^1$-smooth except at $0 \in \R^{k_i}$.
\end{itemize}
The structure theorem for Barron functions can be found in \cite[Theorem 5.9]{barron_new}.
\item In one dimension, for $u:(-1,1)\to\R$ there exists a Barron function $U:\R\to\R$ such that $U=u$ on $(-1,1)$ if and only if $u$ is weakly differentiable and $u'\in BV(-1,1)$ is a function of bounded variation \cite[Section 4.1]{barron_new}.
\end{enumerate}

In any dimension, Barron functions satisfy $\nabla u\in BV_{loc}(\R^d;\R^d)$. In particular, Barron functions admit a measure-valued Hessian. While the argument is straightforward, we believe that this has not been observed previously and prove it in the Appendix for the reader's convenience.

\begin{lemma}\label{lemma bounded variation gradient}
Let $u\in \B(\R^d)$. Then $\nabla u \in BV_{loc}(\R^d; \R^d)$ and the measure norm of the Hessian satisfies $\|D^2 u\|_{\mathcal M(B_R)} \leq \omega_{d-1}R^{d-1}\,\|u\|_\B$.
\end{lemma}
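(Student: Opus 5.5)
We represent $u$ as $u(x) = \int \sigma(w^Tx+b)\d\mu(w,b)$ and compute the distributional Hessian of each ReLU ridge function, then integrate in $\mu$ via duality. We use the standard Barron norm $\|u\|_\B = \inf\{\int |w|\,\d|\mu| : u = \int \sigma(w^Tx+b)\d\mu\}$ (possibly with $|w|+|b|$ weights, or with an affine part added; affine terms have vanishing Hessian and can be ignored). By a normalization using the positive homogeneity of $\sigma$, we may assume $\mu$ is concentrated on $|w|=1$ with total variation $|\mu|(S^{d-1}\times\R) \le \|u\|_\B + \eps$.

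For a single neuron $\phi_{w,b}(x) = \sigma(w^Tx+b)$ with $|w|=1$, we have $\nabla\phi = w\,1_{\{w^Tx+b>0\}}$. Hence $D^2\phi = w\otimes w\,\H^{d-1}\llcorner\{w^Tx+b=0\}$, a rank-one matrix measure of unit operator norm supported on a hyperplane. Its total variation in $B_R$ is $\H^{d-1}(B_R\cap \{w^Tx=-b\}) \le \omega_{d-1}R^{d-1}$, where $\omega_{d-1}$ denotes the volume of the unit ball in $\R^{d-1}$, and equality holds for $b=0$. For a test field $\Phi\in C_c^\infty(B_R;\R^{d\times d})$ with $|\Phi|\le 1$, we therefore get $\big|\int \phi_{w,b}\,\mathrm{div}\,\mathrm{div}\,\Phi\dx\big| \le \omega_{d-1}R^{d-1}$. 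The matrix norm used on $\Phi$ must match the one in the measure norm. Since $w\otimes w$ has unit Frobenius and operator norm, this estimate holds for either convention.

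Next we use Fubini: $\int u\,\mathrm{div}\,\mathrm{div}\,\Phi\dx = \int\int \phi_{w,b}(x)\,\mathrm{div}\,\mathrm{div}\,\Phi(x)\dx\d\mu(w,b)$. This is justified because $\Phi$ has compact support and $|\phi_{w,b}(x)|\le |x|+|b|$. The integrability issue with $|b|$ can be avoided in one of two ways: either use the moment bound in the definition of the Barron norm, or note that on $B_R$ any neuron with $|b|>R$ is affine, so its contribution to $\int \phi\,\mathrm{div}\,\mathrm{div}\,\Phi$ vanishes after two integrations by parts. We then bound the double integral by $\omega_{d-1}R^{d-1}\,|\mu|(\cdots)\le \omega_{d-1}R^{d-1}(\|u\|_\B+\eps)$. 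Taking the supremum over $\Phi$ and letting $\eps\to0$ shows that $D^2u$ is a finite Radon measure on $B_R$ with the claimed bound. Since $u$ is Lipschitz, $\nabla u\in L^\infty_{loc}$ and its distributional derivative is a measure, so $\nabla u\in BV_{loc}$.

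The main obstacle is the technical justification of exchanging integrals and the reduction to $|w|=1$. Only neurons whose hyperplane meets $B_R$ matter, namely those with $|b|\le R$, so we could first split $\mu$ into the part with $|b|\le R$ and the remainder. The remainder restricts to an affine function on $B_R$: for $|b|>R$ the neuron is either identically $0$ or equal to $w^Tx+b$ there. Together with the normalization to the sphere, this makes Fubini immediate.
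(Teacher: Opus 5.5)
Your proof is correct, but it passes from single neurons to $u$ by a different mechanism than the paper. The paper never works with the integral representation directly. It takes finite networks $u_n=\sum_k a_k\sigma(w_k\cdot x+b_k)$ from the direct approximation theorem, with $\sum_k|a_k|\,\|w_k\|\le\|u\|_\B$ and $u_n\to u$ in $L^1(B_R)$. It bounds $|D^2u_n|(B_R)$ by summing the explicit hyperplane measures, and then passes to the limit by lower semicontinuity of the total variation (Giusti, Theorem 1.9). You instead test $D^2u$ against $\Phi\in C_c^\infty(B_R;\R^{d\times d})$ and use Fubini to superpose the per-neuron bounds, which is the continuous analogue of the paper's triangle inequality over finitely many neurons. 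The core computation is the same in both: the Hessian of a single neuron is $w\otimes w$ times $\H^{d-1}$ on its hyperplane $H$, and $\H^{d-1}(H\cap B_R)\le\omega_{d-1}R^{d-1}$. Your route is self-contained. It needs only the representation and the moment $\int(1+|b|)\,\d|\mu|<\infty$, which the full Barron norm in the statement supplies, rather than a sampling-type approximation theorem. Your sup-over-test-fields formulation is also what actually justifies the paper's limit step. Giusti's theorem is stated for first derivatives, whereas the paper needs lower semicontinuity of $D^2$ knowing only $u_n\to u$ in $L^1$. Your route shows directly that only neurons with $|b|<R$ (after normalizing $|w|=1$) contribute, and it makes the matrix-norm convention explicit. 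The paper's route is shorter given the cited results and only manipulates finite sums, so no integrability checks for exchanging integrals are needed.
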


Much more thorough introductions can be found in \cite{ma2022barron, barron_new, wojtowytsch2022optimal}. We note that the `Barron spaces' under consideration in this note are also referred to as `representational Barron spaces' in the literature to distinguish them from the `spectral Barron class' into which they embed \cite[Theorem 3.1]{barron_new}. They are closely related to the function class $\mathcal F_1$ \cite{bach2017breaking} and Radon-BV spaces \cite{ongie2019function, parhi2021banach, unser2023ridges}, which mostly differ depending on whether the bias term is controlled in magnitude or not, i.e.\ whether the full Barron norm or Barron semi-norm is used. While the theories are similar, some surprising differences can be observed \cite{wojtowytsch2022optimal, boursier2023penalising, park2023minimum}.

We give a stronger version of the structure theorem for Barron functions \cite[Theorem 5.9]{barron_new} in spatial dimension $d=2$. As the proof is essentially disjoint from the remainder of the article, we postpone it to Appendix \ref{appendix barron structure}, along with the proof of Lemma \ref{lemma bounded variation gradient}.

\begin{theorem}\label{theorem structure barron sbv2}
Let $\Omega\subseteq \R^2$ be bounded, $f:\Omega \to\R$ be a Barron function and $K$ a relatively closed set of locally finite Hausdorff measure $\mathcal H^1$ such that the distributional Hessian of $f$ in $\Omega\setminus K$ satisfies $D^2f \in L^2(\Omega\setminus K)$. Then there exists a countable, locally finite union of lines $K' = \bigcup_{k\in\mathbb Z}L_k$ such that $K' \subseteq K$ and $D^2f \in L^2(\Omega\setminus K')$. 
\end{theorem}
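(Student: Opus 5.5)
The plan is to work directly with the integral representation $f(x)=\int\sigma(w^Tx+b)\d\mu(w,b)$. I would compute the distributional Hessian as in the proof of Lemma \ref{lemma bounded variation gradient}:
\[
D^2f=\int w\otimes w\,\delta_{\{w^Tx+b=0\}}\d\mu(w,b)=\int |w|\,\frac{w}{|w|}\otimes\frac{w}{|w|}\,\H^1|_{L_{w,b}}\d\mu(w,b).
\]
Here $L_{w,b}=\{x:w^Tx+b=0\}$ denotes the line determined by $(w,b)$; pairs with $w=0$ do not contribute. I would push this forward to the space $\mathcal L$ of affine lines in $\R^2$. The matrix $\frac{w}{|w|}\otimes\frac{w}{|w|}$ depends only on the line, since it is even in $(w,b)$. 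Hence $D^2f=\int_{\mathcal L} \nu_L\otimes\nu_L\,\H^1|_L\d\lambda(L)$ for a finite signed measure $\lambda$ on $\mathcal L$, obtained as the pushforward of $|w|\,\mu$. Note that $\lambda$ need not be finite globally, so I will restrict to lines meeting $\Omega$, which are finite in mass by the moment bound. I then split $\lambda=\lambda_a+\lambda_d$ into its atomic part $\lambda_a=\sum_k a_kL_k$ with $a_k\neq0$ and its diffuse part $\lambda_d$. This gives $D^2f=\sum_k a_k\,\nu_k\otimes\nu_k\,\H^1|_{L_k}+D^2f_d$. Only lines with nonzero net coefficient are kept, so cancellations between $(w,b)$ and $(-w,-b)$ are harmless.

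The key observation is a Crofton-type formula. For any Borel set $A$,
\[
|D^2f_d|(A)\leq\int \H^1(A\cap L)\d|\lambda_d|(L).
\]
I would apply this to two kinds of sets. First, if $A$ is a line, then $\H^1(A\cap L)=0$ unless $L=A$, and $|\lambda_d|(\{A\})=0$; hence $D^2f_d$ charges no line. Second, take $A=K$. Split $K$ into a rectifiable part and a purely unrectifiable part. A line meets the purely unrectifiable part in an $\H^1$-null set, because subsets of lines are rectifiable. A countably $\H^1$-rectifiable set of $\sigma$-finite measure, on the other hand, can contain positive-length pieces of only countably many lines. So $\H^1(K\cap L)>0$ only for countably many $L$, and these form a $|\lambda_d|$-null set. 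Therefore $|D^2f_d|(K)=0$.

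Now use the hypothesis. On $\Omega\setminus K$, the measure $D^2f$ is absolutely continuous. Two distinct lines meet in at most one point, so the atomic terms cannot cancel one another on sets of positive length, and by the first observation $D^2f_d$ cannot cancel them either. Hence $\H^1\big((L_k\cap\Omega)\setminus K\big)=0$ for every $k$. Since $K$ is relatively closed and $(L_k\cap\Omega)\setminus K$ is relatively open in $L_k$, it must be empty, so $L_k\cap\Omega\subseteq K$.

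I would then set $K'=\bigcup_k(L_k\cap\Omega)\subseteq K$. On $\Omega\setminus K'$ we have $D^2f=D^2f_d$, which is $L^2$ on $\Omega\setminus K$ and vanishes on $K\setminus K'$ by the second observation. Hence $D^2f\in L^2(\Omega\setminus K')$. For local finiteness, fix a ball $B\subset\subset\Omega$. Only finitely many $L_k$ can have chord length in $B$ of at least a given $r>0$, since those chords lie in $K$ and $\H^1(K\cap B)<\infty$. Lines with short chords in $B$ stay near $\partial B$, and a slightly smaller ball handles them. I expect the main obstacle to be the measure-theoretic step $|D^2f_d|(K)=0$: making the rectifiability and countably-many-lines argument rigorous, and justifying the Crofton-type bound for the signed measure $\lambda_d$, including integrability near $\partial\Omega$.
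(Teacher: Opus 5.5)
Your proof is correct, and it takes a genuinely different route from the paper's.

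\textbf{The paper's route.} It works on the parameter sphere. It splits $\mu$ into atoms, pieces concentrated on great circles (families of concurrent or parallel lines), and a remainder $\mu^*$ charging no great circle. It then blows $f$ up at each point. The absolutely continuous Hessian of a positively one-homogeneous function is $(-1)$-homogeneous, hence not square integrable in two dimensions, so the concurrent families contribute only linear functions. Finally it asserts, without further argument, that the remainder $f^*$ lies in $H^2(\Omega)$.

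\textbf{Your route.} You push $|w|\mu$ forward to the space of lines and split it into atomic and diffuse parts. The Crofton bound $|D^2f_d|(A)\le\int\H^1(A\cap L)\d|\lambda_d|(L)$ then shows that the diffuse part charges no line and no set of $\sigma$-finite $\H^1$-measure. Hence on $\Omega\setminus K'$ the Hessian coincides with the $L^2$ density already given on $\Omega\setminus K$.

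\textbf{What your route buys.} First, you never need to identify the structure of $f_d$. A term like $|x-x_0|$ is excluded directly by the hypothesis, so no blow-up is required. Second, you cover diffuse line families whose Hessian is singular but invisible to pointwise blow-ups. Examples are Cantor-distributed parallel lines, or tangent lines to a circle with Cantor-distributed angles; without the hypothesis these give $f^*\notin H^2$. Your Crofton step is exactly what justifies the paper's claim $f^*\in H^2(\Omega)$ under the hypothesis. Third, working with net coefficients $a_k=\lambda(\{L_k\})$ automatically discards lines on which $\sigma(z)$ and $\sigma(-z)$ cancel to a linear function. The paper's choice of $K'$ via atoms $m_i\neq 0$ on $S^2$ does not exclude such lines.

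\textbf{What the paper's route buys.} It yields pointwise information (every blow-up of $f$ is piecewise linear), which your proof neither needs nor produces.

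\textbf{Two simplifications.} The rectifiable/purely unrectifiable split is unnecessary. Distinct lines share at most one point, so any finitely many distinct lines satisfy $\sum_i\H^1(K\cap L_i\cap B)\le\H^1(K\cap B)<\infty$. This alone shows that only countably many lines meet $K$ in positive length. Also, $\lambda$ is globally finite, with total variation at most $\int|w|\d|\mu|<\infty$. Together with $\H^1(L\cap\Omega)\le\mathrm{diam}\,\Omega$, this removes any integrability concern near $\partial\Omega$.
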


The theorem statement is complicated by the fact that the set $K$ can be replaced by a larger closed set of finite Hausdorff measure, so the structure only holds for the {\em minimal} such set.

\section{No Barron-Lipschitz gap for integral first-order functionals}

Our first result is positive. For a large class of functionals, some of which do exhibit a positive gap between $W^{1,1}$ and $W^{1,\infty}$ (e.g.\ the example of \cite{mania1934sopra}), there exists no additional gap between the smaller class of Barron functions and the intermediate class of Lipschitz functions.

\begin{theorem}\label{theorem no barron-lipschitz gap}
Let $\Omega\subseteq\R^d$ be a bounded domain, $f:\overline \Omega\times \R\times \R^d \to \R$ and $g:\partial\Omega\times\R\times \R^d\to\R$ continuous functions. Further, let $1\leq q < \infty$ and $\eps>0$.

\begin{enumerate}
\item Assume that $\Omega$ has finite perimeter or $g\equiv 0$. If $w\in C^{0,1}(\overline\Omega)$, then there exists $w_\eps\in \B$ such that  $\|w-w_\eps\|_{W^{1,q}(\Omega)} \leq \eps$ and $\big|J(w_\eps) - J(w)\big| < \eps$ where
\[
J(u) = \int_\Omega f(x,u, Du)\dx + \int_{\partial\Omega} g(x,u)\d A.
\]

\item If $w\in C^{0,1}(\overline\Omega)$ and there exists a Barron function $\phi \in \B$ such that $\phi \equiv w$ on $\partial\Omega$, then there exists $w_\eps\in \B$ such that $w_\eps = w$ on $\partial\Omega$, $\|w-w_\eps\|_{W^{1,q}(\Omega)} + \|w-w_\eps\|_{C^0(\overline\Omega)} \leq \eps$ and $\big|J(w_\eps) - J(w)\big| < \eps$ where
\[
J(u) = \begin{cases} \int_\Omega f(x,u, Du)\dx & \text{if }u\equiv \phi \text{ on }\partial\Omega \\ + \infty &\text{else.}\end{cases}
\]
\end{enumerate}
In particular, we find in both cases that $\inf_{u\in\B}J(u) = \inf_{u\in W^{1,\infty}(\Omega)}J(u)$.
\end{theorem}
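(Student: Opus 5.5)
The idea is to approximate $w$ by smooth functions and then use Fact 3, which says that a $C^\infty$ function agrees with some Barron function on any ball $B_R(0)\supseteq\overline\Omega$. Part (1) thus reduces to smooth approximation that preserves the energy. For part (2) we write $w=\phi+v$ with $v$ Lipschitz and $v=0$ on $\partial\Omega$. We then approximate $v$ by smooth functions with compact support in $\Omega$, extend them to Barron functions, and add $\phi$. Since $\B$ is a vector space, the result is Barron and matches the boundary values exactly.

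\textbf{Part (1).} We extend $w$ to a Lipschitz function on $\R^d$ (McShane extension, same constant $L$) and set $w_\delta=w*\eta_\delta$. Then $w_\delta\to w$ uniformly on $\overline\Omega$, $\|\nabla w_\delta\|_\infty\le L$, and $\nabla w_\delta\to\nabla w$ in $L^q(\Omega)$ and a.e. All arguments $(x,w_\delta,\nabla w_\delta)$ lie in a compact set $\overline\Omega\times[-M,M]\times \overline{B_L}$, on which $f$ is bounded and uniformly continuous. Dominated convergence (bounded domain) therefore gives $\int_\Omega f(x,w_\delta,\nabla w_\delta)\to\int_\Omega f(x,w,\nabla w)$. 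The boundary term involves only the trace, and here $w_\delta\to w$ uniformly on $\partial\Omega$. Since $g$ is uniformly continuous on the relevant compact set and $\mathcal H^{d-1}(\partial\Omega)$ is finite (finite perimeter; I read $\d A$ as the measure on the reduced boundary), the boundary integrals converge. If $g\equiv0$ this step is not needed. (The statement writes $g(x,u)$; any dependence on $Du$ would be problematic on the boundary, so I treat $g$ as depending on $(x,u)$ only.) Finally, Fact 3 replaces $w_\delta$ by a Barron function equal to it on $\overline\Omega$.

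\textbf{Part (2).} Set $v=w-\phi$. It is Lipschitz on $\overline\Omega$ because Barron functions are Lipschitz (Fact 2), and $v=0$ on $\partial\Omega$. We need smooth $v_\delta\in C_c^\infty(\Omega)$ with $v_\delta\to v$ uniformly, $\nabla v_\delta\to\nabla v$ in $L^q$ and a.e. along a subsequence, and $\|\nabla v_\delta\|_\infty$ uniformly bounded. The construction is a truncation followed by mollification. Let $v^\delta=\sign(v)\max(|v|-\delta,0)$. It is Lipschitz with constant $\le L$, and it vanishes on $\{|v|<\delta\}$, which contains a neighbourhood of $\partial\Omega$ because $|v(x)|\le L\,\dist(x,\partial\Omega)$. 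Hence $\operatorname{supp}v^\delta\subseteq\{\dist(\cdot,\partial\Omega)\ge\delta/L\}$. Mollifying at scale $\delta/(2L)$ produces a $C_c^\infty(\Omega)$ function. Uniform convergence is clear. The gradient $\nabla v^\delta=\nabla v\,\mathbf 1_{\{|v|>\delta\}}$ converges to $\nabla v\,\mathbf 1_{\{v\neq0\}}=\nabla v$ a.e., using that $\nabla v=0$ a.e. on $\{v=0\}$. A diagonal argument then handles the mollification.

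Extending $v_\delta$ by Fact 3 to a Barron function $V_\delta$ with $V_\delta=v_\delta$ on $\overline\Omega$, we set $w_\eps=\phi+V_\delta\in\B$. This equals $\phi=w$ on $\partial\Omega$, so $J(w_\eps)$ is finite. The energy converges by the same compactness and dominated convergence argument as in Part (1), since $\nabla w_\eps=\nabla\phi+\nabla v_\delta$ is uniformly bounded and converges a.e.

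The final claim follows directly. We have $\B|_\Omega\subseteq W^{1,\infty}$, which gives "$\ge$", and the approximations give "$\le$". In case (2), Lipschitz competitors with the wrong boundary values have $J=\infty$, so only admissible $w$ matter.

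The main obstacle is the boundary-preserving approximation in Part (2): uniform control on $\nabla v_\delta$ together with exact vanishing near $\partial\Omega$ for an arbitrary bounded domain. The truncation trick above is my first choice because it uses only the Lipschitz bound and needs no boundary regularity.
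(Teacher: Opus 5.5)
Your proposal is correct and follows the paper's proof essentially step for step. For part (1) both use a Lipschitz extension: the paper uses Kirszbraun plus a truncation to preserve $\min$ and $\max$, and your McShane extension works equally well. This is followed by mollification, dominated convergence and Fact 3. For part (2) both use the splitting $w=\phi+(w-\phi)$ and the truncation $h_\eps(w-\phi)$, which vanishes in a boundary layer because $|w-\phi|\le L\,\dist(\cdot,\partial\Omega)$, followed by mollification and a diagonal argument.

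One small caveat, which the paper shares: your approximations only give $\inf_{\B}J=\inf_{C^{0,1}(\overline\Omega)}J$. Replacing $C^{0,1}(\overline\Omega)$ by $W^{1,\infty}(\Omega)$ in the final claim requires these two spaces to coincide (e.g.\ on Lipschitz domains), as the paper's remark following the theorem points out.
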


All bounded open sets with Lipschitz boundary (in particular, all bounded convex open sets) have finite perimeter, so the theorem applies in large generality. The expression $\d A$ denotes the integral with respect to the area measure $\|\nabla 1_\Omega\|$, which coincides with the $d-1$-dimensional Hausdorff measure on $\partial\Omega$ for regular boundaries. The conditions on $f, g$ can be relaxed, see Remark \ref{remark generalization}.

\begin{proof}
{\bf First claim.} Since $u$ is Lipschitz continuous on $\overline{\Omega}$, by Kirszbraun’s Theorem \cite[Theorem 7.2]{maggi2012sets} there exists an extension $U : \mathbb{R}^d \to \mathbb{R}$ such that $U = u$ on $\overline{\Omega}$ and $[U]_{Lip(\R^d)} = [u]_{Lip(\overline\Omega)}$. Moreover, by defining $\tilde{u}(x) = \max\{U(x), \min u\}$ and $ \tilde{U}(x) = \min\{\tilde{u}(x), \max u\}$, we find that there exists an extension $\tilde{U}$ (which we still denote by $U$) satisfying $U = u \text{ on } \overline{\Omega},$ $\min U = \min u,$ and  $\max U = \max u.$

Recall that $C^{0,1}(\overline\Omega)$ embeds into $W^{1,\infty}(\Omega)$ by Rademacher's theorem \cite[Chapter 3.1]{evans2025measure}. In particular, the distributional gradient of $U$ is given by a measurable bounded vector function. Let  $U_\eps = U \ast \eta_\eps,$ where $\eta$ is a standard mollifier.  By  \cite[Section 5.3, Theorem 1]{evans2022partial}, and  \cite[Appendix C, Theorem 7]{evans2022partial}, we see that $U_\eps \to U$ both uniformly (i.e.\ in $C^0(\overline \Omega))$ and in $W^{1,q}(\Omega)$. Furthermore, the bounded Lipschitz function $U$ lies in $W^{1,\infty}(\R^d)$ with $\|\nabla U\|_{L^\infty} = [U]_{Lip(\R^d)} = [u]_{Lip(\overline\Omega)}$ \cite[Theorem 1.41]{weaver2018lipschitz} and, as shown within the proof of  \cite[Section 5.3, Theorem 1]{evans2022partial}, we have $\nabla  U_\eps = \nabla  U \ast \eta_\eps.$ We conclude by \cite[Theorem 3.11]{rudin1987real} that there exists a subsequence (still denoted by $\{U_\eps\}_{\eps>0}$) such that $\nabla U_\eps \to \nabla U$ almost everywhere.
We further note that
\begin{align*}
\left|U_\eps(x)\right| &= \left| \int_{B_\eps(0)} \eta_\eps(y) \, U(x-y) \d y \right| \leq \|U \|_{L^{\infty}} = \|u \|_{L^{\infty}}\\
\left|\nabla U_\eps(x)\right| &= \left| \eta_\eps  \ast \nabla  U  (x) \right| = \left| \int_{B_\eps(0)} \eta_\eps(y) \, \nabla U(x-y) \d y \right|  \leq \|\nabla U\|_{L^\infty} = [u]_{Lip}
\end{align*}
for all $\eps>0$ and $x\in\R^d$. In other words,  $\|U_\eps\|_{L^\infty(\Omega)} \leq \|u\|_{L^\infty(\Omega)}$ and $\|\nabla U_\eps\|_{L^\infty(\Omega)} \leq [u]_{Lip}.$
As $f$ is continuous, we have:
\begin{align*}
	\left| f(x, U_\eps, \nabla U_\eps) \right| \leq \max_{\big( z \in \overline\Omega, |u| \leq M, \|\xi\|\leq M \big) } |f(z,u,\xi)| < \infty \qquad \forall x\in \overline \Omega.
\end{align*}
Since $\overline\Omega$ has finite measure, by the Dominated Convergence Theorem 
\[\int_{\Omega} f(x, U_\eps, \nabla U_\eps) \d x \to \int_{\Omega} f(x, u, \nabla u) \d x.
\]
Finally, as $U_\eps \to u$ uniformly on $\partial \Omega$  and $g$ is continuous on $\partial \Omega \times [-M,M]$, we see that also $g(x,U_\eps)\to g(x,u)$ uniformly. Since $\Omega$ has finite perimeter or $g\equiv 0$, this means that
\[
\int_{\partial \Omega} g(x,U_\eps) \, dA \to \int_{\partial \Omega} g(x,u) \, dA.
\]
Combining these two, we have shown that $J(U_\eps) \to J(u)$ for some sequence $\eps\to 0^+$. Finally, since $U_\eps\in C^\infty(\R^d)$ by  \cite[Appendix C, Theorem 7]{evans2022partial} and $\Omega$ is bounded, we can find $u_\eps \in \B$ such that $u_\eps = U_\eps$ on $\overline{\Omega}$ by \cite[Theorem 3.1]{barron_new}. Replacing $U_\eps$ by such $u_\eps \in \B,$ we have shown that there exists a sequence $u_\eps \in \B$ such that $u_\eps\to u$ both uniformly and in $W^{1,q}(\Omega)$ for all $q<\infty$ and $J(u_\eps) \to J(u).$

{\bf Second claim.} By assumption, we can decompose $w$ as
\[
w(x) = \big(w-\phi\big)(x) + \phi(x)
\]
with $\phi \in \B$ such that $w-\phi \equiv 0$ on $\partial\Omega$. Additionally, we know that  $[w-\phi]_{Lip} \leq [w]_{Lip} + [\phi]_{Lip} \leq [w]_{Lip} + [\phi]_\B$. In particular:
\[
\big|(w-\phi)(x)\big| \leq \big( [w]_{Lip} + [\phi]_\B\big) \, \dist(x, \partial\Omega).
\]
We define 
$
h_\eps(z) = \sign(z) \max \{|z| - \eps, 0\}
$
and $\partial \Omega^\eps = \{x \in \Omega: \dist(x, \partial\Omega) \leq \frac{\eps}{[w]_{Lip} + [\phi]_\B +1} \}.$ Now, we see that when $x \in \partial \Omega^\eps,$  we have:
\[
\big|(w-\phi)(x)\big| \leq \big( [w]_{Lip} + [\phi]_\B\big) \dist(x, \partial\Omega) \leq \eps \qquad \Ra\quad h_{\eps}(w-\phi)(x) = 0.
\]
We now define $\psi_\eps = h_\eps (w - \phi) + \phi$ and
 claim that as $ \eps \to 0,$ $\psi_\eps \to w$ in $W^{1,p} (\Omega)$ for all $p < \infty$.
First, note that
\[
\|\psi_\eps-w\|_{L^\infty(\Omega)} = \big\|\phi + h_\eps(w-\phi) - \phi - (w-\phi)\big\|_{L^\infty(\Omega)} = \big\|(h_\eps - \id)(w-\phi)\big\|_{L^\infty(\Omega)} \leq \|h_\eps - \id\|_{L^\infty(\R)} = \eps.
\]
where $\id$ denotes the identity map $\id(z) = z$ on $\R$. Now, we consider the convergence of gradients $\nabla h_\eps (w - \phi) \to \nabla (w-\phi)$ as $\eps \to 0.$ To this end, we observe that 
\begin{align*}
h_\eps (z) = \big(z - \eps\big)^+ - \big(z + \eps\big)^-
\end{align*}
where $z_+ = \max\{z, 0\}$ and $z_- = \max\{-z,0\}$.
We claim that for any $u \in W^{1,q}(\Omega)$ we have: $ h_\eps (u)  \in W^{1,q}(\Omega)$  and  $\nabla h_\eps (u) \to \nabla u,$ almost everywhere. By \cite[Part 3, Theorem 4.4]{evans2025measure} we get $\big(u - \eps\big)^+, \big(u + \eps\big)^- \in W^{1,q}(\Omega)$ and we have
\[
\nabla \big(u - \eps\big)^+ = \begin{cases} \nabla u & u>\eps\\ 0 & u \leq \eps \end{cases}
\qquad\text{and}\quad
\nabla \big(u + \eps\big)^- = \begin{cases} 0 & u \geq -\eps\\ - \nabla u & u < -\eps \end{cases}
\]
almost everywhere. Hence
\begin{align} \label{heps}
\nabla h_\eps (u) = \begin{cases}
\nabla u & |u| > \eps \\ 0 & |u| \leq \eps
\end{cases} \quad\xrightarrow{\eps\to0^+} \quad\begin{cases}
\nabla u & u\neq 0 \\ 0 & u=0
\end{cases}
\quad = \nabla u
\end{align}
pointwise almost everywhere, using that $\nabla u = 0$ almost everywhere on a level set (see for instance \cite[Part 4, Theorem 4.4]{evans2025measure}).
As an immediate consequence, we obtain: $\nabla \psi_\eps \to \nabla w$ almost everywhere. Additionally, by \eqref{heps} we can see that
\[
|\nabla \psi_\eps - \nabla w|^q = |\nabla h_\eps(w-\phi) - \nabla (w-\phi)|^q \leq |\nabla(w-\phi)|^q,
\] 
This is because when $\left|\left(w - \phi\right)\right| > \eps,$ $\nabla h_\eps (  w - \phi )  =0,$ thus  $\left| \nabla h_\eps (  w - \phi )  - \nabla (w - \phi) \right| = \left|\nabla ( w- \phi )\right|,$ and when $\left|\left(w - \phi\right)\right| \leq \eps,$ $\nabla h_\eps (  w - \phi )  = \nabla (w-\phi),$ thus  $\left| \nabla h_\eps (  w - \phi )  - \nabla (w - \phi) \right| = 0.$ We have:
\begin{align*}
  |\nabla(w-\phi)|^q \leq \big(|\nabla \phi| + |\nabla w|\big)^q \leq \big([w]_{Lip} + \|\phi\|_\B\big)^q.
\end{align*} 
Applying the Dominated Convergence Theorem and using that $\Omega$ has finite measure, we immediately conclude that $\nabla \psi_\eps \to \nabla w$ in $L^q(\Omega).$ As we have already shown that $\psi_\eps \to w$ in $L^{\infty}(\Omega)$ and using the continuous embedding $L^\infty(\Omega) \to L^q(\Omega)$ for the finite measure set $\Omega$ that $\psi_\eps \to w$ in $L^{q}(\Omega)$, we overall find that $\psi_\eps \to w$ in $W^{1,q}(\Omega).$

It remains to approximate $\psi_\eps$ by Barron functions that satisfy the boundary conditions. As $h_\eps(w-\phi) = 0$ in a neighborhood of $\partial\Omega$, we may extend it by zero to $\R^d\setminus \Omega$ without increasing the Lipschitz constant. 
Now, if $x\in\partial\Omega$ and $\delta < \frac{\eps}{[w]_{Lip}+[\phi]_\B + 1}$, then $h_\eps(w-\phi) \equiv 0$ on $B_\delta(x)$ since
\[
\dist(z, \partial\Omega) \leq \|x-z\| < \delta < \frac{\eps}{[w]_{Lip}+[\phi]_\B + 1} \qquad \forall\ z\in B_\delta(x)
\]
and $h_\eps(w-\phi) \equiv 0$ on $\partial\Omega^\eps$. In particular, $\xi_{\eps,\delta}:= h_\eps(w-\phi) * \eta_\delta$ vanishes on $\partial\Omega$.
As above, we see that
\[
\xi_{\eps,\delta} + \phi \to h_\eps(w-\phi) + \phi = \psi_\eps
\]
as $\delta\to 0$ and $\xi_{\eps,\delta}\in C_c^\infty(\R^d)\subseteq \B$. The proof concludes as in Step 1, using a diagonal sequence $\delta = \delta(\eps)$ such that $w_\eps = \phi + \xi_{\eps,\delta(\eps)} \to w$ in $W^{1,q}(\Omega)$ and uniformly.
\end{proof}

\begin{remark}\label{remark generalization}
Theorem \ref{theorem no barron-lipschitz gap} can be extended in multiple ways. 
\begin{enumerate}
\item A close inspection of the proof demonstrates that the proof does not require continuity of $f$ and $g$, but merely that
\begin{enumerate}
\item for any fixed $x\in\Omega$, the function $f = f(x, u, \xi)$ is lower semi-continuous in $(u, \xi)$. 
\item The function $f$ is jointly measurable in $(x,u,\xi)$.
\item For any compact set $K \subset \R\times \R^d$, there exists $\phi_K\in L^1(\Omega)$ such that
\[
\big|f(x, u, \xi)\big| \leq \phi_K(x) \qquad \forall\ u, \xi \in K
\]
and almost all $x\in\Omega$.
\end{enumerate}
One could similarly consider $f$ in the Caratheodory class, i.e.\ requiring measurability only in $x$ for fixed $u, \xi$, but strengthening the lower semi-continuity in $(u,\xi)$ to continuity.
The assumptions on $g$ can be weakened analogously.

\item One may consider vector-valued functions $U:\Omega\to \R^k$ and $f: \Omega \times \R^k \times \R^{k\times d}\to\R$ instead of the scalar case, defining vector-valued Barron functions as those for which all components are Barron. The same proof applies without modification. 

\item Any Lipschitz function is weakly differentiable and satisfies $\|\nabla u\|_{L^\infty} \leq [u]_{Lip}$, so the functionals in Theorem \ref{theorem no barron-lipschitz gap} are automatically defined on $C^{0,1}(\overline\Omega) \embeds W^{1,\infty}(\Omega)$.

For all connected and bounded Lipschitz-domains $\Omega$, the spaces $C^{0,1}(\overline\Omega)$ and $W^{1,\infty}(\Omega)$ coincide by Rademacher's Theorem \cite[Chapter 3.1]{evans2025measure}. If $\Omega$ is convex, then even $[u]_{Lip(\overline\Omega)} = \|\nabla u\|_{L^\infty(\Omega)}$. However, in the two-dimensional domain $\Omega:= B_1(0)\setminus \{x\geq 0, y=0\}$, functions like the principal branch of $(x,y) \mapsto Im\big((x+iy)^{3/2}\big)$ have bounded gradients, but do not extend to the closure of $\Omega$. 

Barron functions $u:\Omega\to\R$ on the other hand are defined by a representation formula and automatically extend to the entire space with $[u]_{Lip(\R^d)}\leq \|u\|_\B$, although not necessarily uniquely. If there is a gap between $W^{1,\infty}(\Omega)$ and $C^{0,1}(\overline\Omega)$, then the class of Barron functions is to be sorted with the smaller space $C^{0,1}(\overline\Omega)$.
\end{enumerate}
\end{remark}

\section{One-dimensional examples}\label{section 1d}

The first example of a Barron-Lipschitz gap we present is in the spirit of Mani\`a's celebrated one-dimensional example, showing that a gap may exist even in seemingly simple situations. In view of Theorem \ref{theorem no barron-lipschitz gap}, the functional is somewhat less innocuous than that of \cite{mania1934sopra} by necessity.

\begin{theorem}\label{theorem mania style example}
The functional
\[
J: W^{1,\infty}(0,1) \to [0,\infty], \qquad J(u) = \int_0^1 \frac{u^2}{x^3}\dx + \big\||u'|-1\big\|_{L^\infty(0,1)},
\]
satisfies
\[
\inf_{u\in W^{1,\infty}(0,1)}J(u) =0, \qquad \inf_{u\in \B}J(u) =1.
\]
\end{theorem}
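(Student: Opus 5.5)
The plan has two parts: a Lipschitz construction showing the infimum over $W^{1,\infty}$ is zero, and a rigidity argument showing the infimum over $\B$ is at least one.

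\textbf{Lipschitz infimum.} Since $J\geq 0$, it suffices to exhibit a Lipschitz function with $J(u)=0$. This requires $|u'|=1$ almost everywhere and $u\equiv 0$. Exact vanishing is impossible, so instead we aim for $\int_0^1 u^2/x^3\dx$ to be arbitrarily small with $|u'|=1$ a.e., accepting an infimum rather than a minimum. A natural candidate is a zigzag whose teeth shrink geometrically towards $0$. On each dyadic interval $[2^{-k-1},2^{-k}]$ we place a sawtooth of slope $\pm1$ with $N_k$ teeth, so that $|u|\leq 2^{-k}/N_k$ there. The contribution of that interval is then at most
\[
2^{-k}\cdot (2^{-k}/N_k)^2 \cdot 2^{3(k+1)} = 8/N_k^2 .
\]
Choosing $N_k = N 2^{k}$ gives a summable total $\lesssim 1/N^2$, which tends to $0$. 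The resulting $u$ is Lipschitz with constant $1$ and $|u'|=1$ a.e., so $J(u)\lesssim N^{-2}$ and the infimum is $0$. Since $u'$ oscillates infinitely often near $0$, it is not in $BV$, which is consistent with the claimed gap.

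\textbf{Barron infimum, upper bound.} The function $u\equiv 0$ is Barron and gives $J=1$, so the infimum over $\B$ is at most $1$.

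\textbf{Barron infimum, lower bound.} Take $u\in\B$ with $J(u)<\infty$. By fact (5), $u'\in BV(0,1)$, so $u'$ has one-sided limits everywhere and in particular a right limit $a=\lim_{x\to0^+}u'(x)$. Since $u$ is Lipschitz, $u(0)$ exists.

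First, finiteness of $\int u^2/x^3$ forces $u(0)=0$; otherwise the integrand behaves like $u(0)^2x^{-3}$, which is not integrable.

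Next, $u(x)=ax+o(x)$, so if $a\neq0$ then $u^2/x^3\sim a^2/x$, which is again not integrable. Hence $a=0$. It follows that $|u'(x)|$ is close to $0$ on a set of positive measure near the origin, so
\[
\big\||u'|-1\big\|_{L^\infty}\geq \operatorname{ess\,limsup}_{x\to0}\big|\,|u'(x)|-1\big|=1 .
\]
Therefore $J(u)\geq 1$ for every $u\in\B$, and the infimum over $\B$ equals $1$.

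The main point to handle carefully is the existence of the right limit of $u'$ at $0$. One can take the good representative of the $BV$ function $u'$ on $(0,1)$, or note that any Barron function is $C^1$ away from finitely many kinks near a point in one dimension. Either way, the one-sided limit exists because $u'$ has bounded variation. With that in hand, the remaining estimates are elementary.
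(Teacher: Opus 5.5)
Your proposal is correct and follows the paper's proof. For the Lipschitz class you use a slope-$\pm1$ sawtooth refined towards the origin, with dyadic blocks carrying $N\,2^k$ teeth where the paper uses single tents on $[(n+1)^{-\alpha},n^{-\alpha}]$ with $\alpha\to0$; for Barron functions you argue, as the paper does, that $\lim_{x\to0^+}u'(x)$ exists because $u'\in BV(0,1)$, and that finite energy forces this limit and $u(0)$ to vanish.

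One aside should be dropped: a one-dimensional Barron function need not have only finitely many kinks near a point. For example, $\sum_n 2^{-n}\sigma(x-1/n)$ has kinks accumulating at $0$. Your main justification, the one-sided limit of a $BV$ function, is all the argument needs.
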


\begin{proof}
{\bf Step 1. Lipschitz class.} First, we prove that $\inf_{u\in W^{1,\infty}(0,1)}J(u) =0.$ We define $u(x) = \sum f_j(x)$ where each $f_n$ is supported  on an interval
\[
I_n = [x_{n}, x_{{n-1}}]\qquad\text{of length $\ell_n$ such that}\quad x_n = 1- \sum_{j=1}^{n} \ell_j  = \sum_{j=n+1}^\infty \ell_j
\]
with a sequence $\{\ell_j\}_{j=1}^\infty$ to be determined later.  We denote the midpoint of the interval by $r_n = (x_n+x_{n-1})/2$ and define  $ f_n(x) = \max\{0, \ell_n/2-|x-r_n|\}$ meaning that $f_n=0$ outside of $I_n$.

As the intervals $I_n$ are disjoint and fill the entire interval $(0,1)$, the series $u(x) = \sum f_j(x)$ satisfies $|u'| =1$ at all points except for the midpoints $r_n$, boundary points $x_n$ of the intervals $I_n$, and the boundary point $0$ of $[0,1]$. As this set is countable, its measure is zero, so $u\in W^{1,\infty}(0,1)$ and  $ \big\||u'|-1\big\|_{L^\infty(0,1)} =0.$ Now we show that choosing intervals appropriately, for each $\eps$ we can construct $u$ such that $J(u) < \eps.$ We see that because for $x \in(0,1)$ we have $\frac{u^2}{x^3} \geq 0$ and $\cup_{k=1}^{\infty} I_j = (0,1)$ by the Monotone Convergence Theorem, $\int_{\cup_{k=1}^{n} I_j} \frac{u^2}{x^3}\dx  \xrightarrow{n \to \infty} \int_{0}^{1}  \frac{u^2}{x^3}\dx.$ As the intervals $\{I_j\}$ are disjoint we have:
\begin{align*}
J(u) = \lim_{n \to \infty} \int_{\cup_{k=1}^{n} I_j} \frac{u^2}{x^3}\dx = \lim_{n \to \infty} \sum_{j=1}^{n} \int_{I_j} \frac{u^2}{x^3}\dx = \sum_{j=1}^{\infty} \int_{I_j} \frac{{f_j}^2}{x^3}\dx,
\end{align*}
with the possibly infinite right hand side. 
We estimate:
	\begin{align}\label{fj}
		\sum_{j=1}^{\infty} \int_{I_j} \frac{{f_j}^2}{x^3}\dx &\leq \sum_{j=1}^{\infty} \int_{I_j} \frac{{\left(l_j/2\right)}^2}{x_n^3}\dx  
        = \frac14 \sum_{j=1}^{\infty} \left( \frac{{l_j}}{x_n} \right)^3 = \frac14 \sum_{n=1}^{\infty}\left(\frac{x_{n-1}-x_n}{x_n}\right)^3 = \frac14 \sum_{n=1}^{\infty} \left(\frac{x_{n-1}}{x_n} - 1\right)^3.
	\end{align}
Now by taking  $x_n = \frac{1}{\big(n+1\big)^\alpha},$ by \eqref{fj} we have:
	\begin{align*}
	J(u_\alpha) \leq \frac14	 \sum_{n=1}^{\infty}  \left( \left( 1 + \frac{1}{n}\right)^\alpha  -  1 \right)^3 \leq \frac14 \sum_{n=1}^{\infty} \frac{\alpha^3}{n^3} = \frac14 \alpha^3 \zeta(3) \leq \frac{3}{8}\alpha^3,
	\end{align*}
	where $\zeta$ is the Riemann $\zeta$-function.
	 Now, by sending $\alpha \to 0,$  $\lim_{\alpha \to 0} J(u_{\alpha}) = 0. $

{\bf Step 2. Barron class.} Now, we will show that $\inf_{u\in \B}J(u) =1.$ Since $u\in \B$ we have $u'\in BV(0,1).$ By \cite[Theorem 4.51]{apostol1958mathematical}, the one-sided limit $\lim_{t\to x^+} u'(t)$ exists for all $x \in [0,1)$.
Now we claim that if $u\in\B$ and $J(u) <\infty$, then $\lim_{x \to 0^+} u'(x) = u(0) = 0$, otherwise $J(u) = \infty$. Barron functions are in particular Lipschitz-continuous, so the point value $u(0)$ is well-defined. If $u(0) \neq 0$, then there exist $\eps, \delta>0$ such that $|u(x)|>\eps$ for all $x\in [0,\delta)$, so
\[
J(u) \geq \int_0^1 \frac{u^2}{x^3}\dx \geq \int_0^\delta \frac{\eps^2}{x^3}\dx = +\infty,
\]
leading to a contradiction.
Similarly, if $\alpha:= \lim_{x \to 0^+} u'(x) \neq 0 $, we note that there exists $\delta>0$ such that $|u'(x) - \alpha| < |\alpha|/2$ for all $x\in (0,\delta)$. In particular $u'$ does not change sign, so
\[
|u(x)| = \left|u(0) + \int_0^x u'(t)\dt\right| = \int_0^x \big|u'(t)\big|\dt \geq \frac{|\alpha|} 2\,x\qquad \forall\ x\in (0,\delta)
\]
since $u\in W^{1,\infty}(0,1)$ is absolutely continuous. In particular, we have
	 \begin{align*}
	 	\int_{0}^{1} \frac{u^2}{x^3} \dx \geq \int_{0}^{\delta} \frac{(\alpha/2)^2 \,x^2}{x^3}  \dx = \frac{\alpha^2}4  \int_{0}^{\delta} \frac{1}{x}\dx =  \infty.
	 \end{align*}
Again, we reach a contradiction. Consequently,  we have shown that $\lim_{x \to 0^+} u'(x) = 0. $ We note that $J(u) \geq \big\||u'|-1\big\|_{L^\infty(0,1)},$ and as $\lim_{x \to 0^+} u'(x) = 0$ we conclude that  $J(u) \geq 1.$ On the other hand, the constant function $u \equiv 0 \in \B$ satisfies $J(u) = 1$, so $\inf_{u\in \B}J(u) =1$.
\end{proof}
	
A similar gap  arises for functionals like
\[
J_{p,\alpha}(u) = \int_0^1 \frac{|u|^p}{x^\alpha}\dx + \big\||u'|-1\big\|_{L^\infty(0,1)}\qquad\text{and}\quad \widetilde J(u) = \left|\int_0^1 \frac{u'(x)}x\dx\right|+ \big\||u'|-1\big\|_{L^\infty(0,1)}
\]
with $p-\alpha<-1$, depending on whether the derivative can switch between $-1$ and $1$ a finite or infinite number of times. However, at least for $J$ and $J_{p,\alpha}$, minimizing sequences in both Lipschitz space and Barron space converge to the zero function uniformly and thus resemble each other -- it is merely the energy that is poorly approximated. We present a second example in which this is not the case. As the proof resembles that of the previous theorem, we only present a sketch.

\begin{theorem}\label{theorem second mania style example}
The functional
\[
J: W^{1,4}(0,1) \to [0,\infty], \qquad J(u) = \int_0^1 \frac{u^2(u-x)^2}{x^5} + \frac{\big((u')^2-1\big)^2}x + u^2 \dx
\]
satisfies
\[
\inf_{u\in W^{1,\infty}(0,1)}J(u) = \inf_{u\in W^{1,4}(0,1)}J(u)  =0, \qquad \inf_{u\in \B}J(u) >0.
\]
\end{theorem}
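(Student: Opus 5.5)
The plan has two halves: build explicit Lipschitz functions with arbitrarily small energy, then show that every Barron function has energy bounded below by a positive constant.

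\textbf{Lipschitz class.} I would reuse the zigzag construction from Step 1 of the proof of Theorem \ref{theorem mania style example}, now interpolating between the two zeros $u=0$ and $u=x$ of the first integrand. Fix a geometric sequence $x_n = \theta^n$ with $\theta\in(0,1)$ close to $1$. On each interval $I_n=[x_n,x_{n-1}]$, let $u$ be a sawtooth whose slopes are $\pm1$ and whose graph stays within the wedge $0\le u\le x$. In the simplest version $u$ alternates between touching $0$ and touching the line $u=x$, with small teeth, so that $|u|\le x$ and $\min\{|u|,|u-x|\}\lesssim \ell_n$.

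On $I_n$ the first integrand is then of order $x^2\ell_n^2/x^5$, which integrates to order $\ell_n^3/x_n^3$. Summing gives $\sum_n (1-\theta)^3$ over about $\log(1/\delta)/(1-\theta)$ intervals down to scale $\delta$. That count diverges as $\delta\to0$, so I would let the ratio $\ell_n/x_n$ tend to zero, e.g.\ $x_n=(n+1)^{-\alpha}$ as before, giving $\sum ((1+1/n)^\alpha-1)^3\lesssim \alpha^3$. The second integrand vanishes almost everywhere because $|u'|=1$. The third term satisfies $\int u^2\le\int x^2$, which is not small.

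To handle the third term, I would use the branch $u\approx 0$ on most of $(0,1)$ and switch to the branch $u\approx x$ only near $0$, or more simply keep $u$ close to $0$ with teeth of size $\ell_n$. Then $\int u^2\lesssim \sup\ell_n^2$, which is small. So the Lipschitz functions are zigzags near $0$ with $u^2/x^3$ small, exactly as before, giving $\inf=0$ in $W^{1,\infty}$. Since $W^{1,\infty}\subset W^{1,4}$ and $J\ge 0$, the $W^{1,4}$ infimum is also $0$.

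\textbf{Barron class.} Here the key tool is fact (5): $u'\in BV(0,1)$, so the limit $\alpha=\lim_{x\to0^+}u'(x)$ exists. Finiteness of the second term forces $\alpha\in\{\pm1\}$, because otherwise $((u')^2-1)^2\ge c>0$ near $0$ and $\int c/x=\infty$. Lipschitz continuity gives a value $u(0)$, and finiteness of the first term forces $u(0)=0$, by the same argument as in the previous proof using $u^2(u-x)^2/x^5\gtrsim u(0)^4/x^5$.

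Then $u(x)=\alpha x+o(x)$, and finiteness of the first term forces $\alpha=1$: if $\alpha=-1$, then $u^2(u-x)^2\approx 4x^4$, giving $\int 4/x=\infty$. So every admissible Barron function satisfies $u(x)\ge x/2$ on some interval $(0,\delta_u)$.

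The main obstacle is that $\delta_u$ is not uniform, so this alone does not give a uniform positive lower bound. To get one, I would argue quantitatively that $u$ must remain near the branch $u=x$ on a fixed scale. Leaving that branch requires $u'$ to go from $\approx 1$ to $\approx -1$. On an interval where $|u'|$ is not close to $1$ the second term costs a fixed amount, while staying near $x$ on $(0,c)$ costs $\int_0^c x^2$ in the third term.

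Concretely, I would take $u'\approx 1$ near $0$ and let $x_0=\sup\{x: u\ge x/2 \text{ on }(0,x)\}$. Either $x_0\ge c$, in which case $\int u^2\ge c^3/12$. Or at $x_0<c$ the function has dropped to $u(x_0)=x_0/2$ from near the line $u=x$. The key step is to argue that this drop forces $\int_{x_0/2}^{x_0}\bigl(((u')^2-1)^2/x + u^2(u-x)^2/x^5\bigr)\,dx\ge \kappa>0$, a scale-invariant bound obtained by rescaling to $[1/2,1]$. To make this bound rigorous, I would try a compactness or contradiction argument on the rescaled functions $u(x_0 t)/x_0$. Such functions lie in $W^{1,4}$, and their limits would have to satisfy $|u'|=1$ and $u\in\{0,t\}$, which is incompatible with continuity and the drop from $u\approx t$ to $t/2$.

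However, Lipschitz zigzags show that such a drop can in fact be cheap: a sawtooth can descend from the line $u=x$ to $0$ with arbitrarily small energy. So in the contradiction argument I would instead use that the Barron function's $u'$ has finite total variation on each scale. By fact (5) and Lemma \ref{lemma bounded variation gradient}, this caps the number of slope switches at roughly $\|u\|_\B$. This is the delicate point, and I expect it to require a careful choice of which energy term dominates. One further difficulty is that the infimum is over all Barron functions, and I would need to address how a bounded Barron norm enters so that the switch count is actually controlled.
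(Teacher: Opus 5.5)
Your Lipschitz half is correct and is the paper's argument: the tent functions from the proof of Theorem~\ref{theorem mania style example} satisfy $0\le u\le x$, so both $u^2(u-x)^2/x^5$ and $u^2$ are bounded by $u^2/x^3$. Your qualitative Barron step, giving $u(0)=0$ and $\lim_{x\to0^+}u'(x)=1$ and hence $u>2x/3$ on some $(0,\eps)$, is also correct. The gap is the uniform lower bound, which your proposal never establishes.

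Your dichotomy via $x_0$ and your scale-invariant ``drop cost'' were the right idea. You discarded them because of a false claim: a Lipschitz sawtooth \emph{cannot} descend cheaply from the branch $u=x$ to the branch $u=0$. Any continuous $u$ that does so must cross the band $\{x/3<u<2x/3\}$. That is, there are $r<R$ with $u(r)=2r/3$, $u(R)=R/3$ and $x/3<u<2x/3$ on $(r,R)$. On this interval $u^2(u-x)^2\ge 4x^4/81$, so the first term alone contributes at least $\frac4{81}\log(R/r)$. If moreover $R/r\le 5/4$, the mean slope $\frac{R-2r}{3(R-r)}$ on $(r,R)$ is $\le -1$ and tends to $-\infty$ as $R/r\searrow 1$. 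Jensen's inequality for the convex function $z\mapsto\max\{z^2-1,0\}^2$, with weight $1/x\ge 1/R$, then makes the second term blow up. Together this gives a universal $\kappa>0$ with $J(u)\ge\kappa$ for every $u\in W^{1,4}(0,1)$ exhibiting such a crossing, with no Barron structure involved. For slopes $\pm1$ you can see this directly: $\frac{d}{d\log x}\frac ux=u'-\frac ux>-\frac53$ in the band, so a zigzag spends logarithmic length at least $\frac15$ there. The Lipschitz minimizing sequence is cheap only because it never enters the upper well.

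With this estimate the proof closes as in the paper. Either $u\ge x/3$ on all of $(0,1)$, so $J(u)\ge\int_0^1u^2\dx\ge\frac1{27}$. Or $u$ enters $\{u<x/3\}$ somewhere; since it starts in $\{u>2x/3\}$ near the origin, a full band crossing occurs and $J(u)\ge\kappa$. Your own version with threshold $x/2$ also works by the same estimate. On $[x_0/2,x_0]$, either $u$ stays in $\{x/2\le u<2x/3\}$, costing at least $\frac4{81}\log 2$, or it crosses that band after a last visit to $u=2x/3$. The only Barron input needed is the existence of $\lim_{x\to0^+}u'(x)$, which places $u$ in the upper well near $0$. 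Your fallback of bounding the number of slope switches via $\|u\|_\B$ cannot replace this step. The infimum runs over Barron functions of arbitrarily large norm, so any bound that degrades with the switch count is not uniform; it is also unnecessary.
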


\begin{proof}[Sketch of Proof]
The Lipschitz case is analogous to the previous proof with minor modifications. To see this, note that
\[
\int_0^1 u^2\dx \leq \int_0^1 \frac{u^2}{x^3}\dx, \qquad \qquad\int_0^1 \frac{u^2(u-x)^2}{x^5}\dx = \int_0^1 \frac{u^2}{x^3}\,\left(\frac{x-u}x\right)^2\dx\leq \int_0^1\frac{u^2}{x^3}\dx
\]
if $0\leq u\leq x$, in particular for the functions constructed in the proof of Theorem \ref{theorem mania style example} which satisfy $u\geq 0$ by construction and $u\leq x$ since they are $1$-Lipschitz and take the value $u(0)=0$. As previously, the term involving $u'$ vanishes for this sequence.

For Barron functions, the limit $\alpha := \lim_{x\to 0^+} u'(x)$ exists as before. The second integrand forces that $\alpha\in\{-1,1\}$, and in combination with the first integrand, we find that $\alpha =1$ and $u(0) = 0$.

Define the `upper potential well set' $I^+ = \{ x\in (0,1) : 2x/3 < u(x)\}$, the `lower potential well set' $I^- = \{x\in (0,1) : u(x) < x/3\}$ and the `transition set' $I^t = \{x\in (0,1) : x/3 < u(x) < 2x/3\}$. Since $\alpha =1$, we see that $(0,\eps)\subseteq I^+$ for some $\eps>0$. We consider two cases.

{\bf Case 1: $I^-= \emptyset$.} In this case
\[
J(u) \geq \int_0^1 u^2 \dx  \geq \int_0^1 \left(\frac x3\right)^2\dx = \frac1{27}.
\]

{\bf Case 2:} $I^- \neq \emptyset$. Since $u$ is continuous, there exists (at least) one interval $[r,R]\subseteq I^t \subseteq [0,1]$ in which $u$ switches from $I^+$ to $I^-$. By continuity $u(r) = 2r/3$ and $u(R) = R/3$ and thus
\[
\int_0^1 \frac{u^2(u-x)^2}{x^5}\dx \geq \int_r^R \frac{(x/3)^2 (2x/3)^2}{x^5}\dx = \frac4{81}\int_r^R\frac1x = \frac4{81}\,\log\left(\frac Rr\right)
\]
since $u^2(u-x)^2$ is smallest at the boundary points of $[x/3, 2x/3]$. It remains to exclude the possibility of $r,R$ being too close. Note that the average slope in the interval must be 
\[
\frac1{R-r} \int_r^R u'\dx = \frac{u(R)-u(r)}{R-r} = \frac{R/3 - 2r/3}{R-r} = \frac13\left(\frac{R-r}{R-r} - \frac{r}{R-r}\right) = \frac13\left(1- \frac{1}{R/r-1}\right).
\]
In particular, 
\[
\frac Rr \leq \frac 54 \qquad \Ra \quad \frac1{R-r} \int_r^R u'\dx \leq \frac13 \left(1- \frac1{1/4}\right) = -1.
\]
By Jensen's inequality for the convex function $\phi(s) = \max\{s^2-1,0\}^2$, we find that
\begin{align*}
\int_0^1\frac{\big((u')^2-1\big)^2}x\dx &\geq \frac1R\int_r^R \max\{(u')^2-1, 0\}^2\dx\\
    &= \frac{R-r}R\,\frac1{R-r}\int_r^R \max\{(u')^2-1, 0\}^2\dx\\
    &\geq \frac{R-r}R\,\max \left\{ \left(\frac1{R-r}\int_r^Ru'\dx\right)^2-1, 0\right\}^2.
\end{align*}
In the regime $R/r \leq \frac 54$, we can replace $\phi(s)$ by $\tilde\phi(s) = 4(s+1)^2$ since $\phi''(s) = 12s^2 -4\geq 8$ for $s\in (-\infty, -1]$, leading to the simpler estimate
\begin{align*}
\int_0^1\frac{\big((u')^2-1\big)^2}x\dx 
    &\geq 4\frac{R-r}R \left(\frac13 \left(1 - \frac1{R/r-1}\right) + 1\right)^2
    = \frac49\left(1- \frac1{R/r}\right)\left(4- \frac1{R/r-1}\right)^2.
\end{align*}
In particular, either $R/r\geq 5/4$ and $J(u) \geq \frac4{81}\,\log(5/4)$ or $R/r\leq 5/4$ and
\[
J(u) \geq \inf_{s\in(1,\,5/4]} \frac{4}{81}\,\log(s) + \frac{4(s-1)}{9s}\left(4 - \frac1{s-1}\right)^2.
\]
The right term grows to infinity as $(s-1)^{-1}$ as $s\searrow 1$, so the infimum is strictly positive. In particular, in either case, there is no sequence of Barron functions realizing zero energy.
\end{proof}

Following the proof of the Theorem, a minimizing sequence in $W^{1,\infty}(0,1)$ should resemble saw-tooth functions of high frequency, low amplitude, and slope $\pm1$ with higher degrees of refinement at the origin. On the other hand, Barron functions of finite energy resemble the linear function $u(x) = x$ at the origin. So, it is not merely the energy that is poorly approximated, but in fact the shape of functions of almost minimal energy varies significantly between the two function classes. Notably, this limitation concerns not just neural networks, but any method of numerical function approximation relying on functions with derivatives in $BV(0,1)$ and thus many classical approaches.

\section{Folding and bending for graph surfaces}

\subsection{Model and Main Result}

Our next example is a simple model for stretching, bending and folding of thin shells. In a linearized model, we consider surfaces as graphs of functions $u:\Omega\to\R$ which are $H^2$-regular, possibly outside a small set $K$ where the surface `folds'. More formally, we denote by $\mathcal K_\Omega$ the set of relatively closed sets in $\Omega$ of finite $1$-dimensional Hausdorff measure and consider the class
\begin{equation}\label{eq bending surface class}
\mathcal G(\Omega) := \{(u,K) \in H^1(\Omega) \times \mathcal K_\Omega: \nabla u \in SBV^2(\Omega; \R^d ) \cap H^1(\Omega\setminus K;\R^d)\}
\end{equation}
where $SBV^2$ denotes the class of special functions of bounded variation whose regular part is square-integrable, see e.g.
\cite{ambrosio2000functions, bouchitte2002global, conti2017deformation, lazzaroni, ginster2026anisotropic}. We note that all functions considered explicitly below satisfy the stronger regularity assumption $u\in W^{1,\infty}(\Omega)$, strengthening the `gap' result between $\B$ and $\G$.

The energy we consider is a linearized toy model for geometric elastic energies derived from three-dimensional elasticity by means of $\Gamma$-convergence. The functional is composed of three components:
\begin{enumerate}
\item The out-of-plane bending contribution $\int_{\Omega\setminus K} \|D^2 u\|^2\dx$,
\item the in-plane stretch (or `isometric embedding') contribution proportional to $\int_\Omega \|\nabla u \otimes \nabla u - g\|^2\dx$ where $g:\Omega \to \R^{d\times d}_{spd}$ is a target (Riemannian) metric and 
\item the folding contribution, proportional to the measure $\H^1(K)$.
\end{enumerate}
All matrix norms are Frobenius norms.
This functional is both geometrically linear (linearized around the identity deformation) and additionally linearized for graph surfaces. Similar ideas can be found for instance in \cite{bonito2023finite}. Overall, the functional takes the form\footnote{\ Despite the name, our parameters $\lambda,\mu$ are unrelated to the Lam\'e moduli of materials and should be thought of as general parameters for now.}\ $E_{\mu,\lambda, g}: \mathcal G(\Omega) \to [0,\infty]$ with
\begin{equation}\label{eq bending energy}
E_{\mu,\lambda, g}(u,K) = \begin{cases}\int_{\Omega \setminus K} \|D^2u\|^2\dx + \mu \int_\Omega \big\|\nabla u\otimes \nabla u - g\big\|^2 \dx +  \lambda \,\H^1(K) & \text{if }u\in H_0^1(\Omega)\\ +\infty &\text{else.}
\end{cases}
\end{equation}
The energy scales merely with the size of the bending set, while the folding angle $|D^2u|_{sing}$ does not enter, suggesting high plastic flexibility beyond the elastic regime. 
We enforce anchored boundary conditions for $u$, but the result also applies under the clamped boundary condition $\equiv 0$, $\partial_\nu u=1$ on $\partial\Omega$.

The bending component of the elastic energy of a sheet of thickness $h\ll 1$ scales with $h^3$ while the stretching component scales with $h$ when derived from three-dimensional elasticity theory (see e.g.\ \cite{friesecke2006hierarchy} and the sources cited there). In our notation, this suggests that $\mu \sim h^{-2} \gg 1$. The folding parameter $\lambda$ has to be of moderate size.

To exploit radial symmetry, we focus on an annulus $\Omega = B_R\setminus B_r$ in two dimensions where $r, R$ have comparable magnitude. For this, we use the polar basis vectors
\[
e_r = \big(\cos\phi, \sin\phi), \qquad e_\phi = (-\sin \phi, \cos\phi)
\]
and recall that the Hessian takes the form
\begin{equation}\label{eq hessian polar}
u_{rr}\, e_r\otimes e_r +\frac{u_{r\phi}  - u_\phi}r\big(e_r\otimes e_\phi + e_\phi \otimes e_r\big) + \frac{u_{\phi\phi} + r\,u_r}{r^2}\,e_\phi \otimes e_\phi
\end{equation}
in polar coordinates. Our main result is the following. 

\begin{theorem}\label{theorem bending}
Let $\Omega$ be the annular domain $\{x\in\R^2 : r<\|x\|<R\}$ for $0<r<R$ and $E_{\mu,\lambda, g}$ as in \eqref{eq bending energy} with $g = e_r \otimes e_r$. Then, if 
\[
0 < r < R < \exp(1/7)\,r, \qquad
\frac{2R\left(1+\log\left(\frac{R}{r}\right)\right)}
{4r^2-rR-R^2} \leq \lambda < \frac92\,\frac{1/7 - \log(R/r)}{(1-r/R)\,(2r+R)} . 
\]
and we denote by $e_{min} = 1+ \log(R/r) + \lambda\,(R+r)/2$ and assume that
\begin{align*}
 \mu &\geq \max\Biggl\{ \frac{16\,e_{min}}{9r(R-r)},\ \left[ \frac{12}{11r} \,e_{min}
  \right]^2,\
  \frac{30}{r(R-r)} \left( e_{min}-\lambda\,\frac{r^2}{R} \right),\
\left[ \frac{10}{r} \left(e_{min}-\lambda\,\frac{r^2}{R} \right) \right]^2, \\ &\hspace{1 cm}\frac{3}{8r^2\lambda(2r+R)}, \ \frac{e_{min}-1}{2r^2\left[\frac 17 - \log\left(R/r\right) - 2\lambda (1-r/R) (2r+R)/9\right]^2} \ \Biggl\},
\end{align*}
then there exists a composition of two Barron functions $u_1:\R\to\R,\ u_2:\R^2\to\R$ and a circle $S$ such that
\[
\inf_{(u,K) \in\mathcal G(\Omega)} E(u,K) + \frac{2\pi}{49} \leq E(u_1\circ u_2, S) + \frac{2\pi}{49} < E(\tilde u, \tilde K)
\]
for every $\tilde u \in \B$ and any closed set $\tilde K$ such that $\tilde u\in H^2(\Omega \setminus \tilde K)$. 
\end{theorem}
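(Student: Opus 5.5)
The plan has two parts. First, an explicit radial competitor of the form $u_1\circ u_2$ with a circular fold gives the upper bound. Second, Theorem \ref{theorem structure barron sbv2} reduces the fold set of any Barron competitor to a locally finite union of straight lines, and a ray-by-ray energy estimate then shows that no such configuration can get within $\frac{2\pi}{49}$ of the circular fold.

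\emph{Upper bound.} Let $\rho=(r+R)/2$ and take $u_2(x)=\|x\|$ on $\overline\Omega$. Since $u_2$ is smooth near $\overline\Omega$, fact 3 of Section 2 lets us replace it by a Barron function that agrees with it there. Take $u_1(t)=\min\{t-r,R-t\}$, which is Barron by fact 5 of Section 2, and let $S=\{\|x\|=\rho\}$.

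\begin{itemize}
\item The composition $u=u_1\circ u_2$ vanishes on both boundary circles.
\item It satisfies $\nabla u=\pm e_r$, so the stretching term vanishes identically.
\item By \eqref{eq hessian polar}, $u_{rr}=0$, $u_\phi=u_{r\phi}=0$ and $u_r=\pm1$, so $\|D^2u\|^2=1/s^2$ at radius $s$. The bending energy is therefore $2\pi\log(R/r)$.
\item The fold costs $\lambda\,\H^1(S)=2\pi\lambda\rho$.
\end{itemize}

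Hence $E(u_1\circ u_2,S)\le 2\pi(\log(R/r)+\lambda\rho)$, which is below $2\pi e_{min}$. The first inequality of the theorem is trivial.

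\emph{Lower bound for Barron competitors.} Let $\tilde u\in\B$ and let $\tilde K$ be closed with $\tilde u\in H^2(\Omega\setminus\tilde K)$. We may assume $\H^1(\tilde K)<\infty$, since otherwise the energy is infinite. Theorem \ref{theorem structure barron sbv2} lets us replace $\tilde K$ by $K'=\bigcup_k L_k\subseteq\tilde K$, a union of lines, without increasing the energy. Then:

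\begin{enumerate}
\item For each angle $\phi$, consider the ray segment $\gamma_\phi=\{s e_\phi: r<s<R\}$, and let $A\subseteq[0,2\pi)$ be the set of angles whose ray meets $K'$.
\item For $\phi\notin A$, the profile $s\mapsto\tilde u(se_\phi)$ is $H^2$, vanishes at $s=r$ and $s=R$, and has $\int(\tilde u_r^2-1)^2\,ds$ penalized by $\mu$. Its derivative must pass from about $+1$ to about $-1$. A Modica--Mortola type estimate (Young's inequality $a^2+\mu b^2\ge2\sqrt\mu\,|ab|$ applied to $\tilde u_{rr}$ and $\tilde u_r^2-1$) then gives a cost of at least $c\sqrt\mu\, r$ per unit angle.
\item Without the cancellation from a fold, integrating over angles gives at least $c\sqrt\mu\, r\,|A^c|$. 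The assumptions $\mu\ge[\frac{12}{11r}e_{min}]^2$ and $\mu\ge[\frac{10}{r}(e_{min}-\lambda r^2/R)]^2$ should make this exceed $2\pi e_{min}$ whenever $|A^c|$ is not tiny.
\item On every ray, the tangential Hessian component $(u_{\phi\phi}+s u_r)/s^2$, combined with $|u_r|\approx1$ (controlled by the stretch term through the remaining lower bounds on $\mu$), still forces almost the full $\log(R/r)$ bending cost. Up to an error, this gives a bending contribution of at least $2\pi\log(R/r)$.
\end{enumerate}

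\emph{Fold cost of lines.} A single chord of the annulus subtends an angular range of at most $2\arccos(r/R)$. Within $\Omega$ it has length comparable to $R$ times the angle it covers, with a worse ratio than the circle, whose cost is $\rho$ per unit angle. The lower bound on $\lambda$ and the condition $4r^2-rR-R^2>0$ should then imply that covering $A$ by lines costs at least $\lambda\frac{r^2}{R}|A|$ plus a definite excess over $\lambda\rho|A|$. Summing the three contributions, the fold cost, the transition cost on $A^c$, and the bending cost, exceeds $2\pi(\log(R/r)+\lambda\rho)+\frac{2\pi}{49}$. The upper bound on $\lambda$ together with the $1/7-\log(R/r)$ constraints keeps the error terms from items 2 and 4 below this margin.

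\emph{Main obstacle.} The hardest step is the quantitative per-ray estimate. For a ray that meets a line, the fold position is not chosen optimally and $|u_r|$ is only approximately $1$. One must still extract the full $\log(R/r)$ bending term plus a fold-length comparison with explicit constants such as $2\pi/49$. I would first try to prove a one-dimensional inequality of the form
\[
\int_r^R\Big(\frac{u_r^2}{s}+\mu s(u_r^2-1)^2\Big)ds\ \ge\ \log(R/r)-C/\sqrt\mu
\]
for each ray, and then integrate it over $\phi$.
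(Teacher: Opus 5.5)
Your tent competitor and the reduction of $\tilde K$ to a locally finite union of lines via Theorem \ref{theorem structure barron sbv2} are fine. Your Modica--Mortola bound for rays missing $K'$ is one case of the paper's Lemma \ref{lemma slicing}. The lower bound, however, rests on claims that do not hold.

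\textbf{Item 4.} The entry $(u_{\phi\phi}+s\,u_r)/s^2$ of the Hessian of a non-radial function is not controlled by $u_r$, because $u_{\phi\phi}$ can cancel $s\,u_r$. For $u(x)=x_1$ it vanishes identically while $u_r=\cos\phi$. A piecewise affine function folding along lines has $D^2\tilde u=0$ off its folds, and you give no quantitative estimate of how much bending the stretch term forces at finite $\mu$. So your one-dimensional inequality for $\int u_r^2/s$, even if true, gives no lower bound on $E(\tilde u,\tilde K)$.

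\textbf{Fold length.} A line at distance $d$ from the origin costs $s^2/d$ per unit angle at radius $s$, which equals $d$ at the foot of the perpendicular. For $d$ slightly above $(2r+R)/3$, the line is, near its foot, \emph{cheaper} per unit angle than the circle of radius $(r+R)/2$, and those rays meet it in the middle third. So fold length yields no uniform excess, and the case item 3 leaves open (almost every ray meets $K'$) cannot be closed this way.

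\textbf{Item 3.} It also fails quantitatively. The hypotheses on $\mu$ only guarantee a cost of $e_{min}$ per unit angle on rays missing $K'$. The integrated bound therefore beats $2\pi e_{min}$ only if $|A^c|$ is essentially $2\pi$. Rays meeting $K'$ twice, or once in an outer third, are not analyzed at all.

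\textbf{What the paper does instead.} It compares ray by ray with $\inf F$ on \emph{all} rays. It discards the tangential Hessian ($\|D^2u\|^2\ge u_{rr}^2$) and bounds the fold length below by $\lambda\sum_{s\in K_\theta}s^2/R$ on each ray, so $E(\tilde u,\tilde K)\ge\int_0^{2\pi}\widetilde F(u^\theta,K_\theta)\,d\theta$. The lost $\log(R/r)$ and the loss $s-s^2/R$ are absorbed through $R<e^{1/7}r$, the upper bound on $\lambda$ and the last two conditions on $\mu$. On a good ray (exactly one fold, in the middle third) this gives $\widetilde F\ge\inf F-\frac17$. On every bad ray $\widetilde F\ge\inf F+1$; the bad rays are:
\begin{itemize}
\item rays with no fold;
\item rays with two or more folds, where the lower bound on $\lambda$ enters;
\item rays with one fold in an outer third, which forces the average slopes on the two sides to differ in modulus by a factor of at least $2$, penalized by $\mu$.
\end{itemize}

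\textbf{The missing ingredient} is the purely geometric Lemma \ref{lemma multiple lines}: for any locally finite union of lines, the bad directions have measure at least $2\pi/7$. This follows from Lemma \ref{lemma angles} (for a single chord with $R/r\le3/2$, the middle-third arc is at most three times the outer-third arc) and the fact that, after discarding redundant arcs, each direction is covered at most twice. This gives the margin $\frac87\cdot\frac{2\pi}7-\frac{2\pi}7=\frac{2\pi}{49}$.

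\textbf{Choice of competitor.} The conclusion is $E(\tilde u,\tilde K)\ge2\pi\inf F+\frac{2\pi}{49}$. This is weaker than a comparison with your tent profile, because the tent does not minimize $F$: slopes $\pm\sqrt{1-\varepsilon}$ lower $F$ for small $\varepsilon>0$. To get the statement from this bound, take $u_1$ to be the one-dimensional minimizer of $F$ from Lemma \ref{lemma slicing}, which is Barron by fact 5 since its derivative is $BV$, and take $S$ to be the circle through its fold.
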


Much like the composition of two neural networks with $L_1$ and $L_2$ non-linear activations respectively is a neural network with $L_1+L_2$ non-linear activation functions, the composition of two Barron functions lies in both a tree-like function space \cite{wojtowytsch2020banach} (designed to capture wide feed-forward ReLU networks) and the non-linear flow-induced function class \cite{ma2022barron} (designed to describe deep residual networks). Thus, the representational obstacles in this situation can be overcome by passing to deeper neural networks, establishing a `depth separation' result.

For instance, the theorem applies with moderate values $r = 14,\ R = 15,\ \lambda = 0.1$ and $\mu = 40$.
We estimate in the proof that 
\[
\inf_\G E(u,K) \leq 2\pi\left( \lambda\,\frac{r+R}2 + \log\left(\frac Rr\right)  \right) \approx 2\pi \cdot 1.5,
\]
meaning that the relative energy gap is roughly $1.3\%$. Under the scaling $(R,r)\mapsto (\gamma R, \gamma r)$, the admissible parameters scale as $(\lambda,\mu)\mapsto (\lambda/\gamma, \mu/\gamma^2)$ since we prescribed an absolute energy gap. The energy is scale-invariant under the simultaneous dilation $K_\gamma = \gamma K$ and scaling $u_\gamma(x) = \gamma\,u(x/\gamma)$.

While the proof of Theorem \ref{theorem bending} exploits radial symmetry, it is resilient to small perturbations and we expect that a gap $inf_{(u,K)\in \mathcal G(\Omega)} E_{\mu,\lambda, g}(u,K) +\eps\leq   E_{\mu,\lambda,g}(\tilde u,\tilde K)$ with $\tilde u\in\B$ persists on `rough annulus' domains $\Omega = \{ (s\cos\theta, s\sin\theta) : g_1(\theta)<s<g_2(\theta)\}$ if $\|g_1 - r\|_{C^2},\|g_2-R\|_{C^2}$ are sufficiently small and $r,R,\mu,\lambda$ satisfy the conditions of Theorem \ref{theorem bending}. However, the construction of an explicit energy competitor becomes more subtle and we do not claim the same `depth separation' result.

The proof is based on the following `slicing lemma' where the functional $E_{\mu, \lambda,g}$ is `localized' along rays $R_v:= \{ tv/\|v\| : t\in (r,R)\}$. We use two energies: $\widetilde F$ with a guaranteed lower bound for the norm of the full Hessian for all functions and $F$ with the exact expression for radially symmetric functions.

\begin{lemma}\label{lemma slicing}
Let $r<R$. Denote by $\mathcal E$ the collection of all finite subsets of $(r,R)$ and set
\[
\mathcal G_1 = \big\{(u, K) \in H^1_0(r, R) \times \mathcal E : u\in H^2\big((r,R)\setminus K\big)\big\}.
\]
For $\mu, \lambda>0$ define $F, \widetilde F: \mathcal G_1 \to [0,\infty]$
\begin{align*}
\widetilde F(u, K) &= \int_{(r,R)\setminus K} |u''|^2\,s \ds + \mu\int_r^R \big((u')^2-1\big)^2\,s\ds + \lambda \sum_{s\in K}\frac{s^2}R\\
F(u, K) &= \int_{(r,R)\setminus K} \left(|u''|^2 + \left(\frac{u'}{s}\right)^2\right)\,s \ds + \mu\int_r^R \big((u')^2-1\big)^2\,s\ds + \lambda \sum_{s\in K}s.
\end{align*}
Then, if $r, R, \lambda,\mu$ are as in Theorem \ref{theorem bending}, then
\[
\widetilde F(u, K) \geq \inf_{(v, L) \in \G_1} F(v, L) + 1
\]
if $\left( u,K \right)\in \G_1$ and $K$ is either
\begin{enumerate}
    \item empty, 
    \item contains at least two distinct points, or
    \item $K$ is {\em not} contained in the middle third of the interval: $K \not\subseteq (2r/3 + R/3, r/3 + 2R/3)$.
\end{enumerate}
For any $(u,K)$, we have
\[
\widetilde F(u, K) \geq \inf_{(v,L)\in\mathcal G_1}F(v, L) - \frac1{7}.
\]
Both $F$ and $\widetilde F$ have minimizers $(u^*, K^*)$ and $(\tilde u^*, \tilde K^*)$ in $\mathcal G_1$.
\end{lemma}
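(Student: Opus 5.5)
The plan is to compare everything against one explicit competitor for $F$, which turns each claimed inequality into a lower bound on $\widetilde F$ by the constant $e_{min}$ from Theorem \ref{theorem bending}.

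\textbf{Upper bound for $\inf F$.} Fold once at the midpoint $m=(r+R)/2$: take $v(s)=\min\{s-r,R-s\}$ and $L=\{m\}$. Then $|v'|=1$ and $v''=0$ off $L$, so
\[
F(v,L)=\int_r^R \frac{1}{s^2}\,s\ds+\lambda m=\log(R/r)+\lambda\,\frac{R+r}2 .
\]
Hence $\inf_{\G_1}F\le e_{min}-1$, and the first claim reduces to proving $\widetilde F(u,K)\ge e_{min}$ in cases (1)--(3).

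\textbf{Case (2), at least two points.} This is pure counting. The folding term alone gives $\widetilde F\ge 2\lambda r^2/R$. The lower bound on $\lambda$ in Theorem \ref{theorem bending}, namely $\lambda\ge 2R(1+\log(R/r))/(4r^2-rR-R^2)$, is exactly equivalent to $2\lambda r^2/R\ge e_{min}$.

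\textbf{Cases (1) and (3).} These use a bending--stretching interpolation on a single smooth piece.

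In case (1), $u\in H^2(r,R)$, so $u'$ is continuous. Since $u(r)=u(R)=0$, $u'$ has mean zero and therefore vanishes at some $s_0$.

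Two alternatives then arise.
\begin{enumerate}
\item If $|u'|\le 1/2$ on all of $(r,R)$, the stretching term gives $\widetilde F\ge \mu\,\tfrac{9}{16}\,r(R-r)$. This is at least $e_{min}$ by the condition $\mu\ge 16e_{min}/(9r(R-r))$.
\item Otherwise $u'$ must climb from $0$ to $\pm1/2$ over some interval $I$ on which $|u'|\le 1/2$. Cauchy--Schwarz gives bending at least $r/(4|I|)$, and stretching is at least $\mu\tfrac9{16}r|I|$. Optimising in $|I|$ gives a bound of order $r\sqrt\mu$, which is at least $e_{min}$ by the condition $\mu\ge[12e_{min}/(11r)]^2$. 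I expect the exact constants to come from tuning the threshold $1/2$.
\end{enumerate}

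In case (3) there is a single fold $s_0$ outside the middle third, costing at least $\lambda r^2/R$. The argument uses the longer piece, of length $\ge\frac23(R-r)$, and the boundary values. If $|u'|$ stayed close to $1$ with constant sign on both pieces, then $|u(s_0)|$ would be close to both $s_0-r$ and $R-s_0$. These differ by at least $(R-r)/3$, which is a contradiction. So on one piece $u'$ must either deviate substantially from $\pm1$ on a set of definite length, or pass through a zero. The same interpolation as in case (1), run with the smaller budget $e_{min}-\lambda r^2/R$ and slightly worse constants, is then controlled by the $\mu$-conditions involving $30/(r(R-r))$ and $[10/r]^2$.

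This quantitative step is where I expect the main obstacle. One has to choose thresholds so that ``$|u'|$ far from $1$ on a long set'' and ``$u'$ changes sign or magnitude quickly'' together exhaust the possibilities, with the stated constants.

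\textbf{The universal bound $\inf F-\tfrac17$.} Only the remaining case needs work: exactly one fold in the middle third. There $\widetilde F\ge\lambda r^2/R$. Comparing with $\inf F\le\log(R/r)+\lambda(R+r)/2$ gives the gap
\[
\lambda\,\frac{R+r}2-\lambda\,\frac{r^2}R=\frac{\lambda(R-r)(R+2r)}{2R}.
\]
The upper bound on $\lambda$ is designed so that this gap plus $\log(R/r)$ is below $1/7$, up to a leftover term. That leftover should be absorbed using a stretching/bending contribution, via the last $\mu$-condition together with $\mu\ge 3/(8r^2\lambda(2r+R))$. In the other cases the bound follows from the stronger estimate already proved.

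\textbf{Existence of minimizers.} Apply the direct method. Along a minimizing sequence, $\#K\le C/\lambda r$ is bounded, so after passing to a subsequence the fold points converge, possibly merging or reaching the endpoints. Merging only reduces the folding cost, since the folding terms are continuous in the positions. On compact subsets away from the limit fold set, $u''$ is bounded in $L^2$ and $u$ is bounded in $H^1$. Weak compactness and lower semicontinuity of the convex integrands then give a limit in $\G_1$ whose energy does not exceed the liminf.
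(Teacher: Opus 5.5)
Your upper bound on $\inf F$, the two-point case and the overall case structure agree with the paper. Your proof of $\widetilde F\ge\inf F-\frac17$, however, fails. The upper bound on $\lambda$ in Theorem \ref{theorem bending} is equivalent to $\log(R/r)+\frac{2\lambda(R-r)(R+2r)}{9R}<\frac17$. Comparing $\widetilde F\ge\lambda r^2/R$ with $\inf F\le\log(R/r)+\lambda\frac{R+r}2$ instead requires $\log(R/r)+\frac{\lambda(R-r)(R+2r)}{2R}\le\frac17$. Your term is $\frac94$ times the controlled one, so the ``leftover'' is not lower order. For the admissible choice $r=14$, $R=15$, $\lambda=0.1$, your left-hand side is about $0.21>\frac17$. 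Bending and stretching cannot absorb this: for the tent function folded at $\frac{r+R}2$ both vanish, so replacing $\lambda\bar s^2/R$ by $\lambda r^2/R$ already loses too much. The two $\mu$-conditions you cite play no role in any such absorption.

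The missing idea is to compare $\widetilde F(v,\{\bar s\})$ with $F(v,\{\bar s\})$ for the \emph{same} pair, with $\bar s\in(a,b)$, $a=\frac{2r+R}3$, $b=\frac{r+2R}3$. The fold terms differ by $\lambda(\bar s-\bar s^2/R)\le\lambda(a-a^2/R)=\frac{2\lambda(R-r)(2r+R)}{9R}$, since $s\mapsto s-s^2/R$ decreases on $(R/2,R)$; this is exactly what the $\lambda$-bound controls. The rest of the difference is $\int_r^R(v')^2/s\ds$. Write $(v')^2=1-(1-(v')^2)$ and apply Cauchy--Schwarz against the stretching term. This bounds the integral by $\log(R/r)+\eps\sqrt{F(v,\{\bar s\})}$ with $\eps=(r\sqrt{2\mu})^{-1}$. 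To pass from this pointwise inequality to $\inf F$, use that $z\mapsto z-\eps\sqrt z$ is nondecreasing on $[\eps^2/4,\infty)$. Here $\inf F\ge\lambda a$ (from the first part) and $\mu\ge\frac3{8r^2\lambda(2r+R)}$ enter. The last $\mu$-condition then makes $\eps\sqrt{e_{min}-1}$ fit into the slack $\frac17-\log(R/r)-\frac{2\lambda(R-r)(2r+R)}{9R}$.

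Case (3) is also not yet an argument, and your dichotomy does not exhaust the possibilities. If $u'\approx\mp1$ on the long piece, that piece costs nothing. The short piece must then carry an average slope of modulus about $\frac{R-\bar s}{\bar s-r}\ge2$ with constant sign, possibly on an arbitrarily short interval. So there is neither a ``set of definite length'' nor a zero. What is needed is Jensen's inequality for $\phi(z)=(z^2-1)^2$, which is convex on $[1,\infty)$. Let $\alpha,\beta$ be the average slopes on $(r,\bar s)$ and $(\bar s,R)$; then $(\bar s-r)\alpha+(R-\bar s)\beta=0$ gives $|\alpha|\ge2|\beta|$.
\begin{itemize}
\item If $|\beta|\le\frac23$, either $|u'|\le\frac56$ on the long piece, or $|u'|$ passes between $\frac23$ and $\frac56$.
\item If $|\beta|\ge\frac23$, either $|u'|\ge1$ on the short piece, and then $\mu r(\bar s-r)\phi(\alpha)\gtrsim\mu r(R-\bar s)^4/(\bar s-r)^3$; or $|u'|$ passes between $1$ and $\frac43$.
\end{itemize}
These cases produce the constants $30$ and $10$ in the hypotheses.

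Two smaller points. First, in case (1), Cauchy--Schwarz on the transition interval gives only $\frac34r\sqrt\mu$. No threshold $t$ reaches the $\frac{11}{12}r\sqrt\mu$ that $\mu\ge[12e_{min}/(11r)]^2$ is calibrated for, since $\max_t2t(1-t^2)<0.77$. The sharp bound comes from $|u''|^2+\mu((u')^2-1)^2\ge2\sqrt\mu\,|u''|\,|1-(u')^2|$ and the chain rule. Your weaker bound happens to suffice, because $\mu\ge\frac{16e_{min}}{9r(R-r)}$ together with the constraints on $\lambda$ and $R/r$ forces $e_{min}<\frac r{R-r}$. Second, in the existence step, $(z^2-1)^2$ is not convex. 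Lower semicontinuity of the stretching term must come from local uniform convergence of $u_n'$ away from the limiting fold set, not from convexity.
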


Put in physical terms, a single fold in the middle of the interval is energetically favorable, but additional folding or folding towards the edges of the interval is not. In the original two-dimensional problem, this corresponds to the observation that a single circular fold close to the mid circle of the annulus is energetically favorable, but folds along straight lines do not help reduce the elastic energy. As Barron functions can only describe folding along entire straight lines by Theorem \ref{theorem structure barron sbv2}, an energy gap arises.

The restrictions on $R/r$ arising in Lemma \ref{lemma slicing} (and by proxy, Theorem \ref{theorem bending}) could likely be relaxed.
There is another key ingredient to the proof of Theorem \ref{theorem bending} by slicing. Specifically, we use the following auxiliary lemmas to ensure that there are enough `bad' slices along which the energy is high if the singular set of a function is a union of straight lines (e.g.\ if $u\in \B \cap SBV^2$ by Theorem \ref{theorem structure barron sbv2}). Initially, we show this for a single straight line and subsequently for a union of lines.

\begin{lemma}\label{lemma angles}
Let $R, r>0$ and $L$ be a line in $\R^2$ which does not pass through the origin but intersects $B_R(0)$. Define $\pi: \R^2\setminus\{0\}\to S^1$ by $\pi(x) = x/|x|$ and
\[
A = \pi\big(L\cap B_R\setminus B_b\big), \qquad B = \pi\big(L\cap B_b\setminus B_a\big), \qquad C = \pi\big(L\cap B_a\setminus B_r\big)
\]
for $a = \frac{2r+R}3$ and $b= \frac{r+2R}3$. Then 
\[
\frac{|B|}{|A| + |C|} \leq \frac{\cos^{-1}(a/b)}{\cos^{-1}(a/R) - \cos^{-1}(a/b)} \leq \frac{\pi/3}{\cos^{-1}(1/3) - \pi/3} < 5.7 \leq 6
\]
where $|\cdot|$ denotes the (one-dimensional Hausdorff) measure of a set on the circle. If $R/r \leq 3/2$, the stronger bound $|B| \leq 3\,|A|$ holds.
\end{lemma}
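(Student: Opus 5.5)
We parametrize the line by the polar angle. Let $d=\dist(0,L)\in(0,R)$ and let $p$ be the foot of the perpendicular from $0$ to $L$. A point of $L$ whose direction makes the angle $\theta\in(-\pi/2,\pi/2)$ with $p$ has norm $d/\cos\theta$. Hence $L\cap\{\rho_1<|x|<\rho_2\}$ projects under $\pi$ onto the two symmetric arcs $\pm\big(\cos^{-1}(d/\rho_1),\cos^{-1}(d/\rho_2)\big)$, where $\cos^{-1}(d/\rho_1)$ is read as $0$ if $d\geq\rho_1$. The arcs are empty if $d\geq\rho_2$, and the map $\pi$ is injective on $L$. Writing $\theta_d(\rho)=\cos^{-1}(d/\rho)$, we get
\[
|A| = 2\big(\theta_d(R)-\theta_d(b)\big),\qquad |B| = 2\big(\theta_d(b)-\theta_d(\max\{a,d\})\big)
\]
when $d<b$. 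If $d\geq b$, then $B=\emptyset$ and there is nothing to prove. Since $|C|\geq 0$, it suffices to bound $|B|/|A|$, and the factor $2$ cancels.

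We treat two regimes. For $d<a$, write $|B|/2=\int_a^b g_d$ and $|A|/2=\int_b^R g_d$ with $g_d(\rho)=\theta_d'(\rho)=d/(\rho\sqrt{\rho^2-d^2})$. For $\rho<\rho'$, the ratio $g_d(\rho)/g_d(\rho')=\rho'\sqrt{\rho'^2-d^2}\big/\big(\rho\sqrt{\rho^2-d^2}\big)$ is increasing in $d$. Thus the family $g_d$ puts relatively more mass on smaller radii as $d$ grows, i.e.\ it has a monotone likelihood ratio. A standard argument then gives that $\int_a^b g_d/\int_b^R g_d$ is nondecreasing in $d$: differentiate in $d$, or compare the normalized densities on $[a,R]$. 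So on $d<a$ the ratio is maximized in the limit $d\to a$, where it equals $\cos^{-1}(a/b)/(\cos^{-1}(a/R)-\cos^{-1}(a/b))$. For $a\leq d<b$, the ratio is $h(d)=\theta_d(b)/(\theta_d(R)-\theta_d(b))$. I would check directly that $h$ is nonincreasing on $[a,b)$, where it drops to $0$ at $d=b$. Substituting $d=b\cos t$, this reduces to the derivative inequality $\theta_d'(b)\,\theta_d(R)\geq\theta_d(b)\,\theta_d'(R)$ in the $d$-variable, which is again a comparison of the form used in the first regime. This gives the first inequality.

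For the numerical bounds, set $x=r/R\in(0,1)$, so that $a/b=(2x+1)/(x+2)$ and $a/R=(2x+1)/3$. Then $\Phi(x)=\cos^{-1}(a/b)/(\cos^{-1}(a/R)-\cos^{-1}(a/b))$. I would show that $\Phi$ is decreasing in $x$, so its supremum is the limit $x\to0$, namely $\frac{\pi/3}{\cos^{-1}(1/3)-\pi/3}\approx 1.047/0.184<5.7$. A cruder route avoids proving monotonicity. The numerator satisfies $\cos^{-1}(a/b)\leq\pi/3$ since $a/b\geq1/2$. For the denominator, note $\cos^{-1}(a/R)-\cos^{-1}(a/b)$ is the angle between the points of radii $b$ and $R$ on the line at distance $a$. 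One can bound it below through its $x$-dependence: it is decreasing in $x$, so one must combine both terms rather than estimate each separately. For the regime $R/r\leq3/2$, i.e.\ $x\geq2/3$, monotonicity gives $\Phi(x)\leq\Phi(2/3)$. At $x=2/3$ we have $a/b=7/8$ and $a/R=7/9$, so $\cos^{-1}(7/8)\approx0.505$ and $\cos^{-1}(7/9)\approx0.680$, giving $\Phi(2/3)\approx2.9\leq3$. This yields $|B|\leq3|A|$; note that we dropped $|C|$, so the bound is against $|A|$ alone.

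The main obstacle is the two monotonicity claims: that the ratio is maximized at $d=a$ across the two regimes, and that $\Phi$ is monotone in $x$. I would handle the first via the monotone-likelihood-ratio argument above, written as a Chebyshev-type integral inequality. For $\Phi$ I would attempt direct differentiation. If that becomes messy, I would fall back on an interval-by-interval numerical verification, since only the explicit bounds $5.7$ and $3$ are needed.
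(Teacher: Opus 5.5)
Your reduction to $|B|/|A|$, the split into $d<a$ and $a\le d<b$, and the conclusion that $d=a$ is the worst case all agree with the paper.

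For $d<a$ your route is genuinely different and cleaner. The paper applies the monotone L'H\^opital rule in $d$ and has to compute $(f'/g')'$ explicitly. Your monotone-likelihood-ratio observation settles it in two lines: with $\phi=g_{d_2}/g_{d_1}$ decreasing in $\rho$ for $d_2>d_1$, one has $\int_a^b g_{d_2}\ge\phi(b)\int_a^b g_{d_1}$ and $\int_b^R g_{d_2}\le\phi(b)\int_b^R g_{d_1}$.

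For $a\le d<b$ you follow the paper, but your reduced inequality is ambiguous. Since $\partial_d\theta_d(\rho)=-1/\sqrt{\rho^2-d^2}$, the inequality $\theta_d'(b)\,\theta_d(R)\ge\theta_d(b)\,\theta_d'(R)$ read with $d$-derivatives points the wrong way. It is correct only with $'=\frac{d}{dt}$ after $d=b\cos t$. What is needed is $\sqrt{R^2-d^2}\,\cos^{-1}(d/R)\ge\sqrt{b^2-d^2}\,\cos^{-1}(d/b)$. This holds because $z\mapsto\sqrt{z^2-d^2}\cos^{-1}(d/z)$ is a product of nonnegative increasing functions on $[d,\infty)$, which is exactly the paper's argument.

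The real gap is the monotonicity of $\Phi$ in $R/r$. Both the bound $5.7$ and $|B|\le 3|A|$ rest on it, and you only announce it. A purely numerical fallback is delicate: $\Phi$ is a $0/0$ form as $R/r\to1$, and its supremum $\approx5.6987$ is approached only as $R/r\to\infty$. The remark in your cruder route is also false: the denominator $\cos^{-1}(a/R)-\cos^{-1}(a/b)$ is not monotone in $x$. It is about $0.184$ as $x\to0$, about $0.207$ at $x\approx0.3$, and tends to $0$ as $x\to1$.

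The paper applies the monotone L'H\^opital rule in $\alpha=R/r$, with derivative quotient $3\alpha/(\sqrt{6\alpha^2+9\alpha+3}-3\alpha)$. If you follow it, note that this is legitimate only while the denominator's derivative is positive, i.e.\ for $\alpha<(3+\sqrt{13})/2$. That suffices for $R/r\le3/2$, but for larger $\alpha$ you must add that the numerator increases while the denominator decreases.

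Your own tool closes the gap more cleanly.
\begin{itemize}
\item By scaling take $a=1$. Then $b=1+\delta$ and $R=1+2\delta$, with $\delta=(R-r)/(2r+R)\in(0,1)$ increasing in $R/r$.
\item We get $\Phi=\psi(\delta)/(\psi(2\delta)-\psi(\delta))$, where $\psi(\delta)=\cos^{-1}\big(1/(1+\delta)\big)=\int_0^\delta w$ and $w(s)=1/\big((1+s)\sqrt{s(2+s)}\big)$.
\item The ratio $\psi(2\delta)/\psi(\delta)$ is the $w$-weighted average of $2q$ over $(0,\delta)$, where $q(u)=w(2u)/w(u)=\sqrt{(1+u)(2+u)}/\big(2(1+2u)\big)$.
\item This $q$ is decreasing, since $\frac{d}{du}q^2=-(8u^2+14u+5)/\big(4(1+2u)^4\big)<0$.
\item Hence the average decreases in $\delta$: its derivative is $w(\delta)\int_0^\delta 2(q(\delta)-q(u))w(u)\,du$ divided by $(\int_0^\delta w)^2$, which is $\le0$. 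So $\Phi$ increases in $\delta$.
\end{itemize}
The supremum is therefore the limit $\delta\to1$ (i.e.\ $b=2$, $R=3$). For $R/r\le3/2$, i.e.\ $\delta\le1/7$, the bound is the value at $\delta=1/7$, which is your $\Phi(2/3)\approx2.90$; this confirms your numerics.
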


\begin{figure}
\begin{tikzpicture}[scale = 2.4]
\filldraw[fill = red!30](2,0) arc (0:180:2);
\filldraw[fill = blue!30](5/3,0) arc (0:180:5/3);
\filldraw[fill = orange!30](4/3,0) arc (0:180:4/3);
\filldraw[fill = white](1,0) arc (0:180:1);
\node[below] at(1,0){$r$};
\node[below] at(4/3,0){$\frac{2r+R}3$};
\node[below] at(5/3,0){$\frac{r+2R}3$};
\node[below] at(2,0){$R$};

\fill[very thick, fill = blue!50, domain = 43:60] (0,0) -- plot ({.5*cos(\x)}, {.5*sin(\x)});
\fill[very thick, fill = red!50, domain = 35:43] (0,0) -- plot ({.5*cos(\x)}, {.5*sin(\x)});
\fill[very thick, fill = blue!50, domain = 43:60] (0,0) -- plot ({-.5*cos(\x)}, {.5*sin(\x)});
\fill[very thick, fill = red!50, domain = 35:43] (0,0) -- plot ({-.5*cos(\x)}, {.5*sin(\x)});
\fill[fill = orange!50, domain = 60:120] (0,0) -- plot ({-.5*cos(\x)}, {.5*sin(\x)});

\draw[very thick](-2.1,1.15) -- ++ (4.2,0);
\draw[thick](0,0) -- ({sqrt(4 - 1.15^2}, 1.15);
\draw[thick](0,0) -- ({-sqrt(4 - 1.15^2}, 1.15);
\draw[thick](0,0) -- ({sqrt((5/3)^2 - 1.15^2}, 1.15);
\draw[thick](0,0) -- ({-sqrt((5/3)^2 - 1.15^2}, 1.15);
\draw[thick](0,0) -- ({sqrt((4/3)^2 - 1.15^2}, 1.15);
\draw[thick](0,0) -- ({-sqrt((4/3)^2 - 1.15^2}, 1.15);

\node[above, orange] at (0, .5) {$C$};
\node[above, red] at (.4, .05) {$A$};
\node[above, red] at (-.4, .05) {$A$};
\node[blue] at (.37, .47) {$B$};
\node[blue] at (-.37, .47) {$B$};
\end{tikzpicture}

\includegraphics[height = 3.5cm]{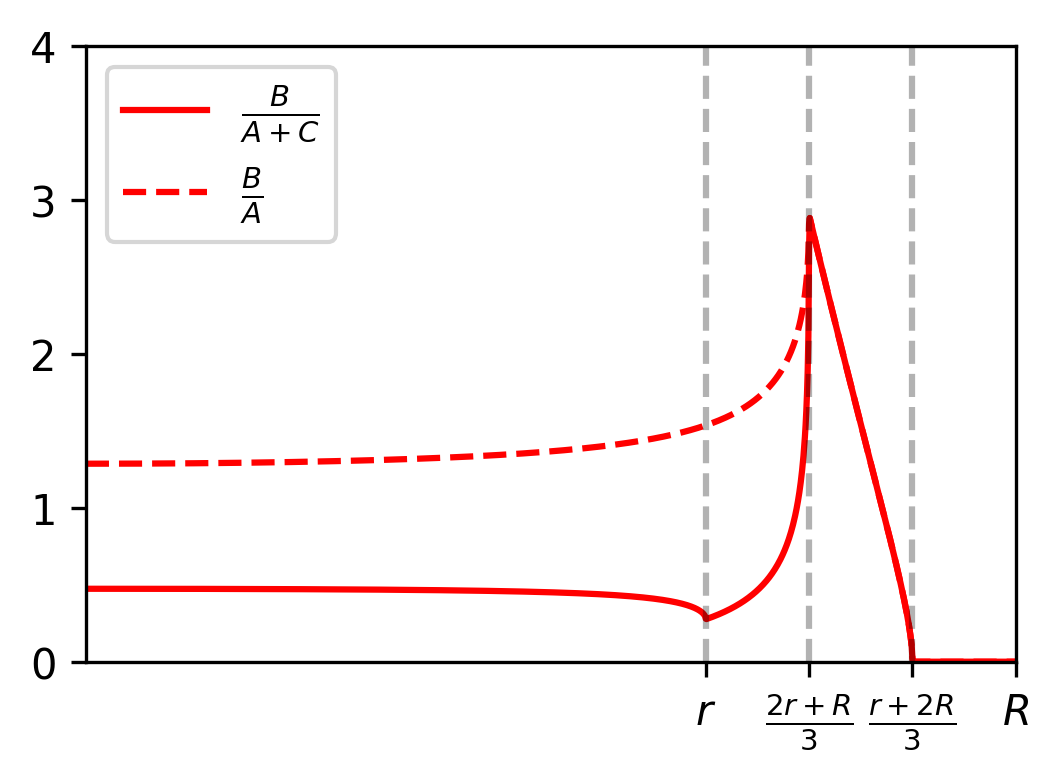}
\includegraphics[height = 3.5cm]{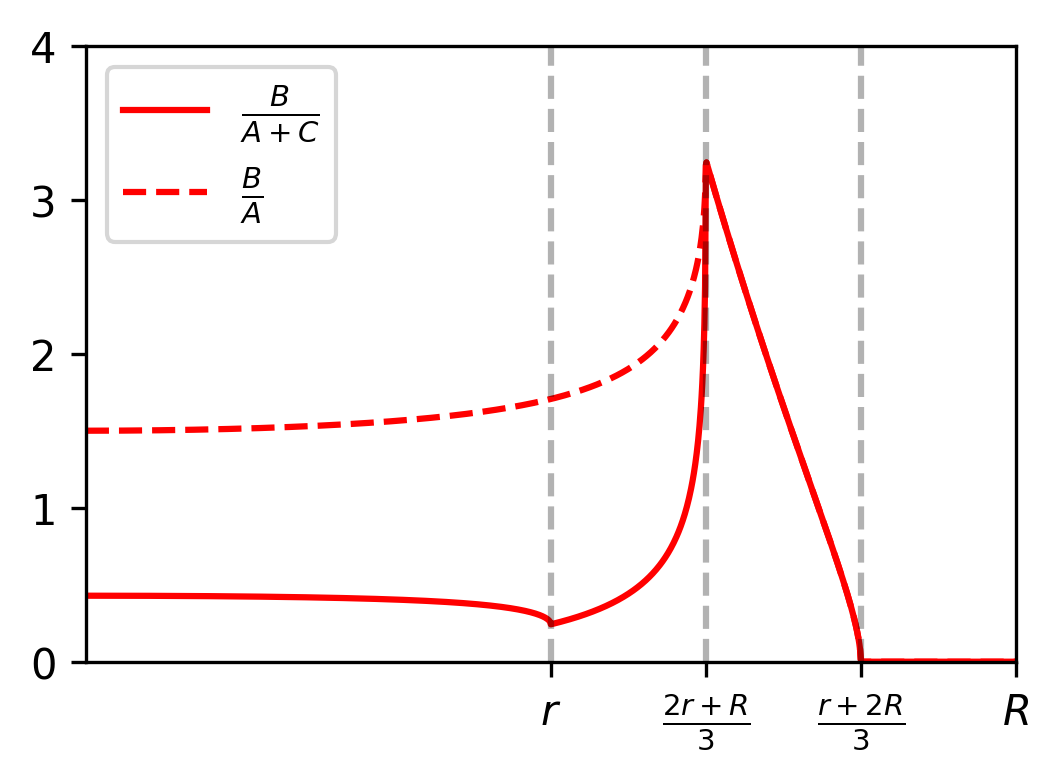}
\includegraphics[height = 3.5cm]{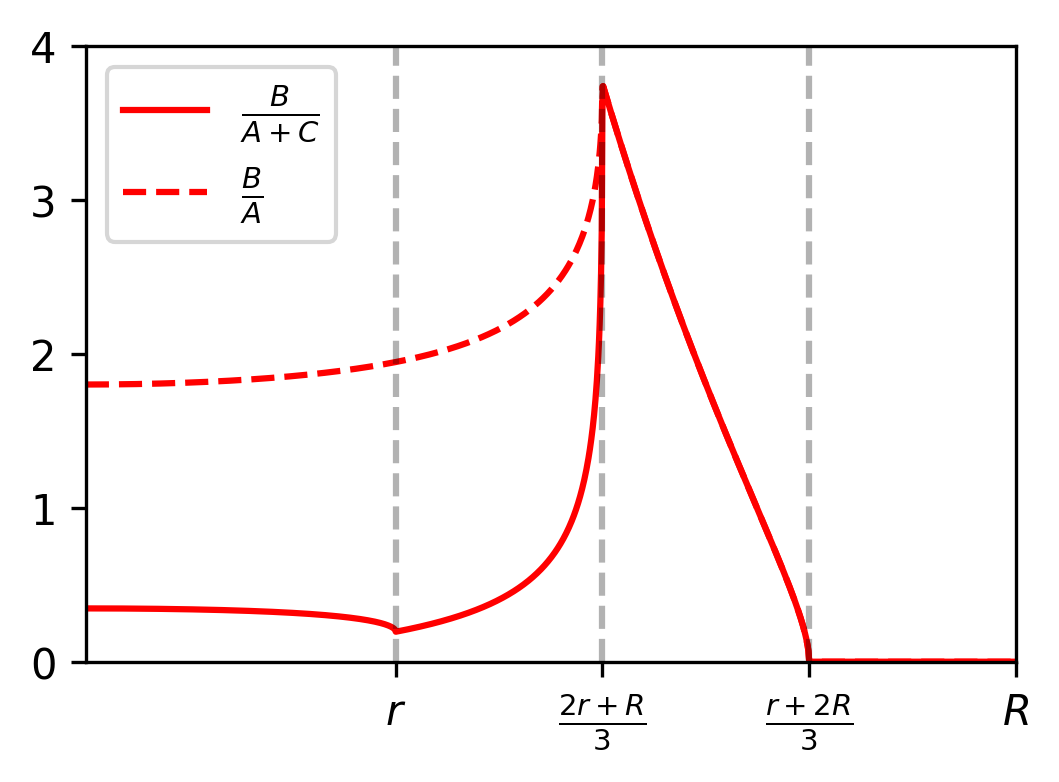}

\caption{\label{figure angles}
{\bf Top row:} The sets $A, B, C$ on the circle correspond to the angles on the unit circle for which the line intersects the inner, central and outer ring respectively. Depending on the distance of the line to the origin one (or multiple) of the sets may be empty, and they may have one or two connected components, see also Figure \ref{figure angles situations}. {\bf Bottom row:} Quotients of the measure of the regions $B/A$ and $B/(A+C)$ as a function of the distance of the line to the origin. If the distance is greater than $R$, then the line does not intersect any of the three circles. In all three plots we choose $r=1$ and vary $R= 1.5$ (left), $R=2$ (middle) and $R=3$ (right).}
\end{figure}

Specifically, the sets $A, C$ correspond to `bad' slices where the line passes through the inner or outer third of the annulus rather than its mid segment. See Figure \ref{figure angles} for a graphical explanation.
We extend these notions to unions of affine lines taking into account the entire circle, not just the segment along which a given line intersects it. For this, we introduce new notation: Given an annulus $B_R\setminus B_r$ and intermediate radii $a = \frac{2r+R}3$ and $b = \frac{r+2R}3$, for a collection of straight lines $\mathcal L =\{\ell_1,\dots, \ell_n\}$, we define the set of `good angles' as
\[
G(\mathcal L) = \left\{\nu \in S^1 : \langle\nu\rangle_+ \cap (B_R\setminus B_r) \cap \left(\bigcup_{i=1}^n \ell_i\right) \text{ is a singleton inside the sub-annulus }B_b\setminus B_a\right\}
\]
and the set of `bad angles' as $B(\mathcal L) = S^1\setminus G(\mathcal L)$. By $\langle \nu\rangle_+$ we mean the ray $\{t\nu : t\geq 0\}$.

\begin{lemma}\label{lemma multiple lines}
Let $\mathcal L$ be a countable and locally finite collection of lines and $B_R\setminus B_r$ an annulus. Then the set of bad angles $B(\mathcal L)$ has measure at least $2\pi/13$. If $R/r \leq 3/2$, then $B(\mathcal L)$ has measure at least $2\pi/7$.
\end{lemma}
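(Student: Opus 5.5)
The plan is a covering argument: each good angle is charged to bad angles via Lemma \ref{lemma angles}, and the work is in controlling overlaps. The constants suggest the structure. Since $13 = 1 + 2\cdot 6$ and $7 = 1+2\cdot 3$, I aim to show $|G(\mathcal L)| \leq 2c\,|B(\mathcal L)|$, with $c = 6$ in general and $c = 3$ if $R/r\leq 3/2$. Then $2\pi = |G|+|B| \leq (2c+1)|B|$ gives the claim.

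\textbf{Reductions.} By local finiteness only finitely many lines $\ell_1,\dots,\ell_n$ meet the bounded annulus $B_R\setminus B_r$. If $n=0$, every ray misses $\bigcup_i\ell_i$ in the annulus, so $B(\mathcal L) = S^1$. A line through the origin meets the annulus only along rays in two directions. Those rays contain whole segments of the line, so these directions are bad; they have measure zero and I discard them.

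\textbf{Structure of good angles.} A ray meets a line not through the origin in at most one point. So for each $i$ the sets $A_i, B_i, C_i$ of Lemma \ref{lemma angles} (built from $\ell_i$) satisfy $A_i\cup C_i\subseteq B(\mathcal L)$, and $B_i\cap B_j\subseteq B(\mathcal L)$ for $i\neq j$. Hence $G(\mathcal L)$ is the disjoint union of the sets $G_i := G(\mathcal L)\cap B_i$. Each $G_i$ is a finite union of arcs, since all sets involved are finite unions of arcs.

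\textbf{Charging.} Decompose $G(\mathcal L)$ into its maximal open arcs $J$. Each such arc lies in a single $B_i$ (by disjointness and connectedness) and is bordered on both sides by bad angles. I charge $J$ to the bad angles immediately adjacent to it on each side, and aim to show this bad region has measure at least $|J|/c$. A given bad angle is adjacent to at most one good arc on its left and one on its right, so the charges overlap with multiplicity at most $2$. Summing over arcs then gives $|G|\leq 2c|B|$.

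To bound the adjacent bad measure, I would rerun the geometry of Lemma \ref{lemma angles} locally. Along $\ell_i$ the angular picture is $A,B,C,B,A$, or a degenerate version of it, as in Figure \ref{figure angles}. An arc $J\subseteq B_i$ that ends because $\ell_i$ leaves the middle sub-annulus is followed by a piece of $A_i$ or $C_i$, and the monotonicity in the proof of Lemma \ref{lemma angles} controls its length relative to the corresponding half of $B_i$. If instead $J$ ends because another line $\ell_j$ enters, then everything from that point until the ray next sees only one intersection, inside $B_b\setminus B_a$, is bad. In that case I would compare against the remaining portion of $B_i$, treating it as $\ell_i$'s own region rather than $J$.

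\textbf{Main obstacle.} The hard step is making this local comparison rigorous when other lines interrupt: the charge must be taken in $\ell_i$'s $A_i\cup C_i$ region while the multiplicity bound of $2$ is kept. What I would try first is this. For each $i$, assign to $G_i$ the set $A_i\cup C_i$ split into its (at most two) symmetric halves. Lemma \ref{lemma angles}, applied half-line by half-line, gives $|G_i\cap\text{half}|\leq c\,|(A_i\cup C_i)\cap\text{half}|$. Then the multiplicity claim is checked by an ordering argument: a bad direction $\nu$ lying in several $A_j\cup C_j$ is charged only by the lines whose good arcs are the nearest ones to $\nu$ in each angular direction.
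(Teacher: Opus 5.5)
Your overall architecture is the paper's: the reductions, the observation $A_i\cup C_i\subseteq B(\mathcal L)$, Lemma \ref{lemma angles} applied half by half via the reflection symmetry of each line, and the closing count $2\pi\le(2c+1)|B(\mathcal L)|$. The gap is the step you flag yourself: the multiplicity-$2$ bound is never proved.

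Your first scheme fails arc by arc. Suppose a short footprint $I(\ell_j)\subseteq B_i$ sits near one end of $B_i$. The long good piece $J$ next to it has $|J|\approx|B_i|$. On one side it borders only the sliver $I(\ell_j)$. On the other side, if further lines start good arcs right after $A_i\cup C_i$, it borders only one half of $A_i\cup C_i$. So $|J|$ can be nearly twice what Lemma \ref{lemma angles} controls.

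The ordering fallback (a bad $\nu$ is charged only by the lines whose good arcs are nearest to $\nu$) is not an argument as stated. Worse, if it actually withheld part of $(A_i\cup C_i)\cap h$ from some half $h$, then the inequality $|G(\mathcal L)\cap h|\le c\,|(A_i\cup C_i)\cap h|$ would no longer bound $|G(\mathcal L)\cap h|$ by what $h$ is actually charged.

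The missing idea is that no selective charging is needed. You only need the pointwise bound $\sum_h\chi_h\le 2$ over the relevant arcs, and it comes from private good points.

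\begin{enumerate}
\item Retain only those halves $h$ (or connected components of $I(\ell_i)$, as the paper does) that meet $G(\mathcal L)$ in positive measure.
\item For each retained $h$, pick $\theta_h\in G(\mathcal L)\cap h$ that is not an endpoint of $h$.
\item By the very definition of a good angle, the ray in direction $\theta_h$ meets no other line inside the annulus. Two halves of the same line share at most an endpoint. Hence $\theta_h$ lies in no other retained arc.
\item All arcs have length $<\pi$. If three retained arcs shared a point, lifting to $\mathbb R$ would show that one of them lies in the union of the other two. That contradicts its private point.
\end{enumerate}

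Writing $i(h)$ for the line carrying $h$, this gives
\[
|G(\mathcal L)|\le c\sum_h \bigl|(A_{i(h)}\cup C_{i(h)})\cap h\bigr| \le 2c\,\Bigl|\bigcup_h (A_{i(h)}\cup C_{i(h)})\cap h\Bigr| \le 2c\,|B(\mathcal L)|.
\]

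The paper obtains the same private-point property by iteratively discarding intervals whose good part is covered by the other intervals. Starting, as you do, from $G(\mathcal L)\cap B_i$ gives it directly, without iteration.
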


A visual representation of Lemma \ref{lemma multiple lines} is given in Figure \ref{figure bad angles}. In the remainder of this section, we give the proofs of first the auxiliary Lemmas and finally Theorem \ref{theorem bending}.

\begin{figure}
    \centering

\begin{tikzpicture}
\fill[color = darkred!30](0,0) circle (2cm);
\fill[color = darkgreen!30](0,0) circle (1.66cm);
\fill[color = darkred!30](0,0) circle (1.33cm);
\fill[color = white!30](0,0) circle (1.0cm);

\fill[color = green](0,0) circle (.3cm);

\draw[purple](0,0) -- (4/3, 1);
\draw[purple](0,0) -- (1, 4/3);
\fill[purple, domain = 36:52, samples =30] (0,0) -- plot ({.3*cos(\x)}, {.3*sin(\x)}) -- (0,0);

\begin{scope}[rotate =90]
\draw[purple](0,0) -- (4/3, 1);
\draw[purple](0,0) -- (1, 4/3);
\fill[purple, domain = 36:52, samples =30] (0,0) -- plot ({.3*cos(\x)}, {.3*sin(\x)}) -- (0,0);
\end{scope}

\begin{scope}[rotate = 180]
\draw[purple](0,0) -- (4/3, 1);
\draw[purple](0,0) -- (1, 4/3);
\fill[purple, domain = 36:52, samples =30] (0,0) -- plot ({.3*cos(\x)}, {.3*sin(\x)}) -- (0,0);
\end{scope}

\begin{scope}[rotate = 270]
\draw[purple](0,0) -- (4/3, 1);
\draw[purple](0,0) -- (1, 4/3);
\fill[purple, domain = 36:52, samples =30] (0,0) -- plot ({.3*cos(\x)}, {.3*sin(\x)}) -- (0,0);
\end{scope}

\draw[very thick] ( -2, 1.33) -- ++ (4,0);
\draw[very thick] ( -2,-1.33) -- ++ (4,0);
\draw[very thick] (1.33, -2) -- ++ (0,4);
\draw[very thick] (-1.33, -2) -- ++ (0,4);
\end{tikzpicture}
\hspace{3mm}
\begin{tikzpicture}[scale = 2/3]
\fill[color = darkred!30](0,0) circle (3cm);
\fill[color = darkgreen!30](0,0) circle (2.66cm);
\fill[color = darkred!30](0,0) circle (2.33cm);
\fill[color = white!30](0,0) circle (2.0cm);

\fill[color = green](0,0) circle (.45cm);

\draw[purple](0,0) -- (2.33, 1.33);
\draw[purple](0,0) -- (1.33, 2.33);
\fill[purple, domain = 30:60, samples =30] (0,0) -- plot ({.45*cos(\x)}, {.45*sin(\x)}) -- (0,0);

\begin{scope}[rotate =90]
\draw[purple](0,0) -- (2.33, 1.33);
\draw[purple](0,0) -- (1.33, 2.33);
\fill[purple, domain = 30:60, samples =30] (0,0) -- plot ({.45*cos(\x)}, {.45*sin(\x)}) -- (0,0);
\end{scope}

\begin{scope}[rotate = 180]
\draw[purple](0,0) -- (2.33, 1.33);
\draw[purple](0,0) -- (1.33, 2.33);
\fill[purple, domain = 30:60, samples =30] (0,0) -- plot ({.45*cos(\x)}, {.45*sin(\x)}) -- (0,0);
\end{scope}

\begin{scope}[rotate = 270]
\draw[purple](0,0) -- (2.33, 1.33);
\draw[purple](0,0) -- (1.33, 2.33);
\fill[purple, domain = 30:60, samples =30] (0,0) -- plot ({.45*cos(\x)}, {.45*sin(\x)}) -- (0,0);
\end{scope}

\draw[very thick] ( -3, 2.33) -- ++ (6,0);
\draw[very thick] ( -3, -2.33) -- ++ (6,0);
\draw[very thick] (2.33, -3) -- ++ (0,6);
\draw[very thick] (-2.33, -3) -- ++ (0,6);

\end{tikzpicture}
\hspace{3mm}
\begin{tikzpicture}[scale = 2/3]
\fill[color = darkred!30](0,0) circle (3cm);
\fill[color = darkgreen!30](0,0) circle (2.66cm);
\fill[color = darkred!30](0,0) circle (2.33cm);
\fill[color = white!30](0,0) circle (2.0cm);

\fill[color = green](0,0) circle (.45cm);

\draw[purple](0,0) -- (2.33, {sqrt(9 - 2.33^2});
\draw[purple](0,0) -- (2.735, {sqrt(9 - 2.735^2)} );
\fill[purple, domain = 25:40, samples =30] (0,0) -- plot ({.45*cos(\x)}, {.45*sin(\x)}) -- (0,0);

\begin{scope}[xscale = -1]
\draw[purple](0,0) -- (2.33, {sqrt(9 - 2.33^2});
\draw[purple](0,0) -- (2.735, {sqrt(9 - 2.735^2)} );
\fill[purple, domain = 25:40, samples =30] (0,0) -- plot ({.45*cos(\x)}, {.45*sin(\x)}) -- (0,0);
\end{scope}

\begin{scope}[yscale = -1]
\draw[purple](0,0) -- (2.33, {sqrt(9 - 2.33^2});
\draw[purple](0,0) -- (2.735, {sqrt(9 - 2.735^2)} );
\fill[purple, domain = 25:40, samples =30] (0,0) -- plot ({.45*cos(\x)}, {.45*sin(\x)}) -- (0,0);
\end{scope}

\begin{scope}[xscale = -1, yscale = -1]
\draw[purple](0,0) -- (2.33, {sqrt(9 - 2.33^2});
\draw[purple](0,0) -- (2.735, {sqrt(9 - 2.735^2)} );
\fill[purple, domain = 25:40, samples =30] (0,0) -- plot ({.45*cos(\x)}, {.45*sin(\x)}) -- (0,0);
\end{scope}

\draw[purple](0,0) -- (-.655, {sqrt(9 - .655^2)} );
\draw[purple](0,0) -- (.655, {sqrt(9 - .655^2)} );
\fill[purple, domain = 76:104, samples =30] (0,0) -- plot ({.45*cos(\x)}, {.45*sin(\x)}) -- (0,0);

\begin{scope}[yscale = -1]
\draw[purple](0,0) -- (-.655, {sqrt(9 - .655^2)} );
\draw[purple](0,0) -- (.655, {sqrt(9 - .655^2)} );
\fill[purple, domain = 76:104, samples =30] (0,0) -- plot ({.45*cos(\x)}, {.45*sin(\x)}) -- (0,0);

\end{scope}

\draw[domain = -1: 3.4, samples = 2, very thick] plot({\x}, {.5*\x-2.6});
\draw[domain = -1: 3.4, samples = 2, very thick] plot({-\x}, {.5*\x-2.6});
\draw[domain = -1: 3.4, samples = 2, very thick] plot({\x}, {-.5*\x+2.6});
\draw[domain = -1: 3.4, samples = 2, very thick] plot({-\x}, {-.5*\x+2.6});
\draw[very thick] (-2.33, -3.2) -- ++ (0, 6.4);
\draw[very thick] (2.33, -3.2) -- ++ (0, 6.4);

\end{tikzpicture}
\caption{\label{figure bad angles}The set of `bad angles' (red) and `good angles' (green) for various line arrangements and different values of $r$ and $R$. The `good' set corresponds to the set $B$ in Figure \ref{figure angles}, with the caveat that angles for which two lines are intersected by the ray automatically count as `bad'.}
\end{figure}

\subsection{Auxiliary Lemmas: One-dimensional slices}
We finally prove Lemma \ref{lemma slicing}.

\begin{proof}[Proof of Lemma \ref{lemma slicing}]
{\bf Step 1.} To find an upper bound on $\inf F$, we construct a piecewise linear energy competitor with a single non-smooth point. We define \(u\)  and $K$  by
\[
u(x)= \dist(x, \{r,R\}) = \frac{R-r}{2}-\left||x|-\frac{R+r}{2}\right|, \qquad  K = \{(r+R)/2\}.
\]
Then  $u\in W^{1,\infty}(r,R)\subseteq H^1(r,R)$  and \(u\in H^2\big((r,R)\setminus K\big)\) as well as $u(r) = u(R) = 0$, hence \((u,K)\in \mathcal G_1\). Moreover,
\[
u'(x)=1 \quad \text{for } r<x<\frac{r+R}{2},
\qquad
u'(x)=-1 \quad \text{for } \frac{r+R}{2}<x<R,
\]
and $u''(x)=0$ for $x\neq (R+r)/2$. Therefore,
\[
\inf_{(v,L)\in \mathcal G_1}F(v,L)
\leq
F(u,K)
=
\int_r^R \frac{1}{s} \,ds+\lambda \frac{r+R}{2}
=
\log \left(\frac{R}{r}\right)+\lambda \frac{    r+R}{2}.
\]
While the Euclidean norm on $\R^d$ cannot be represented exactly by a neural network of finite size for $d\geq 2$, it is a Barron function in any dimension with
\[
g(x) = c_d\,\mathbb E_{\nu\sim\pi_0} \big[\sigma(\nu^Tx)\big], \qquad c_d =\|g\|_\B = \frac 2{\int_0^1 (1-s^2)^\frac{d-3}2\,s\ds} = 2(d-1)
\]
with $\sigma(s) = \max\{s,0\}$ by \cite{wojtowytsch2022optimal}. The energy competitor $u(x) = \frac{R-r}2 - \left|\|x\|_2 -\frac{R+r}2\right|$ can therefore be represented by the composition of Barron functions
\[
u_2(x) = \|x\|_2 - \frac{R+r}2, \qquad u_1(s) = \frac{R+r}2 - |s|
\]
since constant functions and the absolute value $|s| = \sigma(s) + \sigma(-s)$ are Barron. 

{\bf Step 2.} When $|K|\geq 2,$ we have
\begin{align*}
\widetilde F(u, K) &= \int_{(r,R)\setminus K} |u''|^2\,s \ds + \mu\int_r^R \big((u')^2-1\big)^2\,s\ds + \lambda \sum_{s\in K}\frac{s^2}{R}
\\&\geq\lambda \sum_{s\in K}\frac{s^2}{R} \geq \lambda \frac{r^2}{R} |K| \geq \frac{2\lambda r^2}{R}.
\end{align*}
For now, we collect bounds to be analyzed and compared in Step 5 of this proof.

{\bf Step 3.} Consider $K = \emptyset$, i.e.\ $u\in H^2(r,R)$. Since $u(r) = u(R)$ we have $\int_r^R u'(s)\ds  =0$. The analysis of this step follows that of the Modica-Mortola or Ginzburg-Landau energy functional \cite{alberti2000variational}. Consider two options:
\begin{enumerate}
\item $|u'(s)|\leq 1/2$ for all $s$. Then
\[
\widetilde F(u, K) \geq \mu\int_r^R\left(1-(u')^2\right)^2s\ds\geq \mu r\int_r^R \left(\frac34\right)^2  \ds = \frac{9}{16}\left(R-r\right)r \mu.
\]
\item Assume that there exists $s^*$ such that $|u'(s^*)| >\frac12$. Since the integral of $u'$ vanishes, there furthermore exists a point $s_*$ such that $u'$ takes opposite signs at $s^*$ and $s_*$. 
Without loss of generality, we simplify notation by assuming that $s_*<s^*$ and $u'(s_*) < 0 < 1/2 < u'(s^*)$. Then, by Young's inequality:
\[
\int_r^R \left\{(u'')^2 + \mu \big((u')^2-1\big)^2\right\}s\ds \geq 2\sqrt{\mu}r\int_r^R\big|1-(u')^2\big|\,|u''|\ds \geq 2\sqrt\mu r\int_{s_*}^{s^*} \big|1 - (u')^2\big|\,u''\ds.
\]
Denoting an anti-derivative of $z\mapsto |1-z^2|$ by $G$, we find that
\[
\widetilde F(u, K) \geq 2\sqrt\mu r\int_{s_*}^{s^*} \frac{d}{ds} G(u'(s))\ds \geq 2\sqrt\mu r\big(G(1/2) - G(0)\big) = \frac{11}{12}\sqrt\mu\,r.
\]
\end{enumerate}
So if $K=\emptyset$, we have
\[
\widetilde F(u,K) \geq \min\left\{\frac{9}{16}(R-r)r\mu, \: \frac{11}{12}\,\sqrt\mu\,r\right\}.
\]

{\bf Step 4.} Assume that $K = \{\bar s\}$ with $\bar s \in\left(r, \frac{2r+R}3\right)$. The argument in this step is a slightly more involved version of that in Step 3. 

Denote the average slopes on the subintervals by $a = \frac1{\bar s - r} \int_{r}^{\bar s} u'(s)\dx$ and $b = \frac1{R-\bar s} \int_{\bar s}^R u'(s)\ds$. Then
\[
0 = u(R) - u(r) = \int_r^R u'(s)\ds = \int_{r}^{\bar s} u'(s)\ds + \int_{\bar s}^Ru'(s)\ds = (\bar s-r) a + (R-\bar s) b
\]
so 
\[
a = -\frac{R-\bar s}{\bar s -r}\,b\qquad \Ra \quad |a| = \left|\frac{R-\bar s}{\bar s - r}\right| |b| \geq \frac{2(R- r)/3}{(R- r)/3}\,|b| = 2|b| \quad\text{since }\bar s\in \left(r, \frac{2r+R}3\right).
\]
In particular, $|a|$ and $|b|$ cannot both be close to $1$.
Assume first that $|b|\leq 2/3$. As before, we distinguish two cases:
\begin{itemize}
\item If $|u'| \leq 5/6$ for all $s \in (\bar s, R)$, then
\[
\widetilde F(u,K) - \frac{\lambda r^2}{R} \geq \mu \int_{\bar s}^R \big((u')^2-1\big)^2\,s \ds \geq \mu r \int_{\bar s}^R \left(\left(\frac56\right)^2 - 1\right)^2 \ds \geq \frac{\mu r(R-\bar s)}{20} \geq \frac{\mu r(R-r)}{30}
\]
since $((5/6)^2-1)^2 = 121/1296\geq 1/20$ and $R-\bar s \geq 2(R-r)/3$.
\item Else, there must exist $s_1, s_2$ such that $|u'(s_1)| \leq 2/3$ and $|u'(s_2)|\geq 5/6$ and as in the previous step:
\[
\widetilde F(u, K) \geq 2\sqrt\mu r \big(G(5/6) - G(2/3)\big)  +\lambda \frac{r^2}R\geq \frac{\sqrt\mu r}{10} + \frac{\lambda r^2}{R}
\]
since
\[
G(5/6) - G(2/3) = \int_{2/3}^{5/6} 1-z^2\dz = \frac{47}{648}\geq \frac1{20}.
\]
\end{itemize}
Finally, if $|b|\geq 2/3$, then $|a|\geq 4/3$ and a similar argument as above goes through, assuming that the interval $\bar s-r$ is not too short to exclude constant functions from the set of candidates of low energy. To exclude this possibility, we follow the argument more carefully. Note that
\[
|a| \geq \frac{R-\bar s}{\bar s-r}|b| \geq \frac23\,\frac{R-\bar s}{\bar s-r},
\]
so if $\bar s-r$ is small, the lower bound becomes tighter.
\begin{itemize}
\item Note that $\phi(z) = (1-z^2)^2$ is a non-negative convex function on $[1,\infty)$. Thus, if $u'\geq 1$ on $(r, \bar s)$, then by Jensen's inequality we have
\[
\int_{r}^{\bar s} \phi(u'(s))s\ds \geq r\frac{\bar s-r}{\bar s-r}\int_r^{\bar s}\phi(u'(s))\ds \geq r(\bar s-r)\,\phi\left(\frac1{\bar s-r}\int_{\bar s}^r u'(s)\ds\right) = r(\bar s-r)\,\phi(a).
\]
In particular,
\begin{align*}
\widetilde F(u,K) -\frac{\lambda r^2}{R}&\geq \mu r(\bar s-r)\left(\left(\frac23\,\frac{R-\bar s}{\bar s-r}\right)^2 - 1\right)^2 
    = \mu r(\bar s-r)\left(\frac{4(R-\bar s)^2 - 9 (\bar s-r)^2}{9 (r-\bar s)^2}\right)^2\\
    &\geq \mu r(\bar s-r)\left(\frac{4(R-\bar s)^2 - (9/4) (R-\bar s)^2}{9 (r-\bar s)^2}\right)^2
    = \mu r(\bar s-r)\left({\frac{7}{9 \cdot 4}}\,\frac{(R-\bar s)^2}{(\bar s-r)^2}\right)^2\\
    &= {\frac{49(R-\bar s)\mu r}{1296}} \left(\frac{R-\bar s}{\bar s-r}\right)^3  \geq \frac{98(R-r)\mu r}{3\cdot 1296} \,2^3 = \frac{49}{243}\,(R-r)r\mu \geq \frac15(R-r)r\mu
\end{align*}
since $\bar s - r \leq (R-r)/3 < 2(R-r)/3 \leq R-\bar s$. The same argument applies if $u'\leq -1$ on $(r, \bar s)$.

\item Otherwise, there exist points $s_*, s^*$ such that $|u'(s_*)| \leq 1$ and $|u'(s^*)| \geq |a| \geq  2|b|  \geq 4/3$. By the same Modica-Mortola style argument as above, we conclude that
\[
\widetilde F(u, K) \geq 2\sqrt{\mu}r\big(G(4/3)-G(1)\big) + \frac{\lambda r^2}{R} = 2\sqrt\mu r\int_1^{4/3}z^2-1\dz + \frac{\lambda r^2}{R} = \frac{20}{81}\sqrt\mu r+ \frac{\lambda r^2}{R} \geq \frac{\sqrt\mu r}5 + \frac{\lambda r^2}{R}.
\]
\end{itemize}
As we always used the loose lower bound $s\geq r$, the case $K = \{\bar s\}$ with $\bar s\in ((2R+r)/3, R)$ can be treated analogously and yields the same constants.

Overall, if $K\subseteq (r, (2r+R)/3)$ or $K\subseteq((r+2R)/3, R)$, then
\begin{align*}
\widetilde F(u,K) &\geq \min\left\{\frac{\mu r(R-r)}{30}, \: \frac{\sqrt\mu r}{10}, \frac{(R-r)r\mu}5, \:\frac{\sqrt\mu\,r}5\right\}+\lambda\,\frac{r^2}R
    = \min\left\{\frac{\mu r(R-r)}{30}, \: \frac{\sqrt\mu r}{10}\right\} + \lambda\,\frac{r^2}R.
\end{align*}

{\bf Step 5.}
We have shown that, when $K = \emptyset$, $\#K\geq 2$ or $K =\bar s$ with $\bar s\in (r, (2r+R)/3) \cup ((2R+r)/3, R)$ we have
\begin{align*}
\widetilde F(u, K) &\geq \min \left\{\frac{2\lambda r^2}{R},\:\frac{9r(R-r)\mu}{16}, \:\frac{11}{12}\sqrt{\mu}r, \: \frac{\mu r(R-r)}{30} +\frac{\lambda r^2}{R}, \:\frac{\sqrt{\mu}r}{10} +\frac{\lambda r^2}{R}\right\}.
\end{align*}
Thus, in view of Step 1 it suffices to show that 
\begin{align*}
\widetilde F(u, K) \geq \log\left(\frac Rr\right) + \lambda\,\frac{R+r}2 + 1\geq \inf_{(v, L) \in \G_1} F(v, L) + 1
\end{align*}
for appropriate $\mu$ and $\lambda$, i.e.\ 
\begin{align*}
\min \left\{\frac{2\lambda r^2}{R},\:\frac{9r(R-r)\mu}{16}, \:\frac{11}{12}\sqrt{\mu}r, \: \frac{\mu r(R-r)}{30} +\lambda\frac{ r^2}{R}, \:\frac{\sqrt{\mu}r}{10} +\lambda\frac{ r^2}{R}\right\} \geq 1+\log \left(\frac{R}{r}\right) +\lambda \frac{r+R}{2}.
\end{align*}
We solve for $\lambda, \mu$. The first condition is the only one that does not involve $\mu$, so we require
\[
2\lambda \frac{r^2}R \geq 1+ \log\left(\frac Rr\right) + \lambda\frac{r+R}2 \qquad\LRa\quad \lambda \left(2\frac{r^2}R - \frac{r+R}2\right) = \frac{\lambda R}2 \left(4\left(\frac rR\right)^2 - \frac rR - 1\right) \geq 1 + \log\left(\frac Rr\right).
\]
The condition 
\[
4\left(\frac rR\right)^2 - \frac rR - 1 \geq 0 \qquad\Rightarrow \quad 1< \frac Rr\leq \frac{\sqrt{17}-1}2
\]
on the radii $r<R$ is necessary for solvability, and sufficient with
\[
\lambda \geq \frac{2}{R} \frac{1+ \log\left(\frac Rr\right)}{4\left(\frac rR\right)^2 - \frac rR - 1} = \frac{2R\left(1+\log\left(\frac{R}{r}\right)\right)}
{4r^2-rR-R^2}.
\]
Finding admissible $r,R,\lambda$, it is now possible to solve the remaining inequalities for $\mu$ as
\begin{align*}
\mu &\geq \frac{16}{9r(R-r)} \left( 1+\log\left(\frac{R}{r}\right) +\lambda\frac{r+R}{2} \right), \\
\mu &\geq \left[ \frac{12}{11r} \left( 1+\log\left(\frac{R}{r}\right) +\lambda\frac{r+R}{2} \right) \right]^2, \\
\mu &\geq \frac{30}{r(R-r)} \left( 1+\log\left(\frac{R}{r}\right) +\lambda\left(\frac{r+R}{2}-\frac{r^2}{R}\right)
 \right), \\
\mu &\geq \left[ \frac{10}{r} \left( 1+\log\left(\frac{R}{r}\right) +\lambda\left(\frac{r+R}{2}-\frac{r^2}{R}\right)
\right) \right]^2
\end{align*}
and thus ultimately
\begin{align*}
 \mu &\geq \max\biggl\{ \frac{16}{9r(R-r)} \left( 1+\log\left(\frac{R}{r}\right) +\lambda\frac{r+R}{2} \right), \left[ \frac{12}{11r} 
 \left( 1+\log\left(\frac{R}{r}\right) +\lambda\frac{r+R}{2} \right) \right]^2,\\
  &\qquad \frac{30}{r(R-r)} \left( 1+\log\left(\frac{R}{r}\right) +\lambda\left(\frac{r+R}{2}-\frac{r^2}{R}\right) \right),
\left[ \frac{10}{r} \left( 1+\log\left(\frac{R}{r}\right) +\lambda\left(\frac{r+R}{2}-\frac{r^2}{R}\right) \right) \right]^2 \biggl\}.
\end{align*}
If $r,R,\lambda,\mu$ meet all conditions, then 
\[
\widetilde F(u,K) \geq \inf_{(v,L)\in \mathcal G_1} F(v,L)+1
\]
when $K$ does not model a single bend in the center third of the interval $(r,R)$. Further conditions on the parameters will be found below to ensure a lower bound when $K$ models a single bend in the mid third of the interval. This bound required on $R/r$ is stricter than the bound found above; to avoid redundancy, it is the statement given in the Lemma.

{\bf Step 6.} Finally, assume that $K = \{\bar s\}$ with $\bar s\in (a,b) = \big(\frac{2r+R}3, \frac{r+2R}3\big)$. In this case, $\widetilde F$ may take lower values than $F$, albeit not by a large margin. For later use, we obtain a lower bound on $\inf F$ using the previous two steps.
First, we note that $F\geq \widetilde F \geq \inf F + 1$ if $K$ is not a singleton contained in $((2r+R)/3,(r+2R)/3)$. On the other hand, if $K$ is such, then $F(u,K) \geq \lambda \,\frac{2r+R}3$, meaning that $\inf F\geq \lambda\frac{2r+R}3$.

Note that
\begin{align*}
\int_r^R \frac{(u')^2}s\ds &= \int_r^R \frac1s\ds  - \int_r^R\frac{\big(1-(u')^2\big)\sqrt s}{s^{3/2}}\ds\\
    &\leq \log\left(\frac Rr\right) + \left(\int_r^R \frac1{s^3}\ds \right)^{1/2}\left(\int_r^R\big(1-(u')^2\big)^2\,s\ds\right)^{1/2}\\
    &\leq \log\left(\frac Rr\right) + \frac1{\sqrt {2\mu}\,r}\left(\mu\int_r^R\big((u')^2 - 1\big)^2s\ds\right)^{1/2}
\end{align*}
and thus
\begin{align*}
\widetilde F(v, \{\bar s\}) &\geq F\big(v,\{\bar s\}\big) + \lambda\left(\frac{\bar s^2}R - \bar s\right) - \log\left(\frac Rr\right) - \frac1r\,\sqrt{\frac{F\big(v, \{\bar s\}\big)}{2\mu}}\\
    &\geq F\big(v,\{\bar s\}\big) + \lambda\left(\frac{ (2r+R)^2}{9R} - \frac{2r+R}3\right) - \log\left(\frac Rr\right) - \frac1r\,\sqrt{\frac{F\big(v, \{\bar s\}\big)}{2\mu}}\\
    &= F\big(v,\{\bar s\}\big) + \frac{2\lambda}3\left(\frac rR - 1\right)\frac{2r+R}3 - \log\left(\frac Rr\right) - \frac1{\sqrt{2\mu}\,r}\,\sqrt{F\big(v, \{\bar s\}\big)}
\end{align*}
since the discrepancy between $s^2/R$ and $s$ is larger the closer $s$ is to $r>R/2$. 
The inequality is currently pointwise in $v$, so we need to convert it to an absolute inequality for the infimum (which is generally not attained for the same function $v$ by the functionals $F, \widetilde F$). Since
\[
\frac{d}{dz} \left(z - \eps \sqrt z\right) = 1 - \frac\eps2\,z^{-1/2} \geq 0 \qquad\forall\ z\geq \left(\frac\eps2\right)^2,
\]
we have
\begin{align*}
\widetilde F &\geq \inf_{(v,L)\in\mathcal G_1} F(v,L) - \frac23 \lambda \left(1-\frac{r}R\right) \frac{2r+R}3 - \log\left(\frac Rr\right) - \frac1r\sqrt{\frac{\inf F}{2\mu}}\\
    &\geq \inf F - \frac23 \lambda \left(1-\frac{r}R\right)\,\frac{2r+R}3 - \log\left(\frac Rr\right) - \frac1{r\sqrt{2\mu}} \sqrt{\log\left(\frac Rr\right) + \lambda\frac{r+R}2}
\end{align*}
assuming that
\[
\inf_{(u,K)\in \mathcal G_1}F(u,K) \geq \lambda\,\frac{2r+R}3 \geq \frac{1}{8r^2\mu}.
\]
We thus require
\[
\mu \geq \frac{3}{8\lambda\,r^2 (2r+R)}
\]
and 
\[
\frac23 \lambda \left(1-\frac{r}R\right)\,\frac{2r+R}3 + \log\left(\frac Rr\right) + \frac1{r\sqrt{2\mu}} \sqrt{\log\left(\frac Rr\right) + \lambda\frac{r+R}2} \leq \frac17
\]
in order to attain $\widetilde F \geq \inf F - \frac1{7}$. For the latter, we find necessary conditions that
\[
\log\left(\frac Rr\right) < \frac 17\qquad \text{and}\quad \frac23 \lambda \left(1-\frac{r}R\right)\,\frac{2r+R}3 + \log\left(\frac Rr\right) < \frac 17,
\]
i.e.\ that $R< e^{1/7}r$ and 
\[
\lambda \leq \frac{9}2\,\frac{\frac17 - \log(R/r)}{(1-r/R)(2r+R)}.
\]
The first condition is stricter and thus replaces the previous bound on $R/r$. 
If both are satisfied, we can find a final necessary condition on the parameters as
\[
\mu \geq \frac{\log\left(\frac Rr\right) + \lambda\frac{r+R}2}{2r^2\left[\frac 17 - \log\left(R/r\right) - 2\lambda (1-r/R) (2r+R)/9\right]^2}.
\]

{\bf Step 7.} The existence of a minimizer follows by the direct method of the calculus of variations. Specifically, a minimizing sequence $(u_n, K_n)$ has the form $(u_n, \{s_n\})$ with $s_n \in ((2r+R)/3, (r+2R)/3)$ by the previous steps. We can `split' the interval and consider $\widehat F:H^2(0,1)\times H^2(0,1) \times \left[(2r+R)/3, (r+2R)/3\right] \to \R$ with
\begin{align*}
\widehat F(u_1, u_2, s) &= 
 (s-r)^{-3}\int_0^1|u_1''|^2 \big(r + (s-r)\sigma\big)\d\sigma + (R-s)^{-3}\int_0^1|u_2''|^2 \big(s + (R-s)\sigma\big)\d\sigma\\
    &\qquad + (s-r)^{-1} \int_0^1 \frac{(u_1')^2}{r+(s-r)\sigma}\d\sigma + (R-s)^{-1}\int_0^1 \frac{(u_2')^2}{s+(R-s)\sigma}\d\sigma\\
    &\qquad + (s-r)\mu\int_0^1\Big(1- (s-r)^{-2}(u_1')^2\Big)^2  (r+(s-r)\sigma)\d\sigma\\
    &\qquad+ (R-s)\mu\int_0^1\Big(1- (R-s)^{-2}(u_2')^2\Big)^2  (s+(R-s)\sigma)\d\sigma+ \lambda s &
\end{align*}
if $u_1(0) = u_2(1) = 0$ and $u_1(1) = u_2(0)$ and $+\infty$ otherwise. The functional $\widehat F$ is lower semi-continuous with respect to the weak convergence of $u_1, u_2$ in $H^2(0,1)$ and the convergence of $s$ to a value {\em inside} $(r,R)$ by standard results, in particular the compact embedding yielding strong convergence of $u_1',u_2'$ in $L^4(0,1)$. In particular, $\widehat F$ has a minimizer $(\bar u_1, \bar u_2, \bar s)$.

By design, 
\[
u:(r,R)\to \R, \qquad u(s) = \begin{cases} u_1\left(\frac{s-r}{\bar s - r}\right) & s\in (r,\bar s)\\
u_2\left(\frac{s-\bar s}{R-\bar s}\right) &s\in (\bar s, R)\end{cases}
\]
is a minimizer of $F$. An analogous construction is possible for $\widetilde F$.
\end{proof}

\subsection{Auxiliary Results: Straight lines in annuli}

\begin{proof}[Proof of Lemma \ref{lemma angles}]
We denote the distance from the origin to the line $L$ by $x$. Depending on $x$, $L$ may intersect different numbers of annuli. We will prove the claim by analyzing the cases illustrated in Figure \ref{figure angles situations} for $x$ separately, moving from larger to smaller $x$. Without loss of generality, we may assume that $L$ is parallel to the $x$-axis.

\begin{figure}
\begin{tikzpicture}[scale = 1]
\filldraw[fill = red!30](2,0) arc (0:180:2);
\filldraw[fill = blue!30](5/3,0) arc (0:180:5/3);
\filldraw[fill = orange!30](4/3,0) arc (0:180:4/3);
\filldraw[fill = white](1,0) arc (0:180:1);
\draw[very thick](-2.1,2.15) -- ++ (4.2,0);
\end{tikzpicture}
\begin{tikzpicture}[scale = 1]
\filldraw[fill = red!30](2,0) arc (0:180:2);
\filldraw[fill = blue!30](5/3,0) arc (0:180:5/3);
\filldraw[fill = orange!30](4/3,0) arc (0:180:4/3);
\filldraw[fill = white](1,0) arc (0:180:1);
\fill[very thick, fill = red!50, domain = 66:112] (0,0) -- plot ({.5*cos(\x)}, {.5*sin(\x)});
\draw[thick, black!50](0,0) -- ({sqrt(4 - 1.85^2}, 1.85);
\draw[thick, black!50](0,0) -- ({-sqrt(4 - 1.85^2}, 1.85);
\draw[very thick](-2.1,1.85) -- ++ (4.2,0);
\end{tikzpicture}
\begin{tikzpicture}[scale = 1]
\filldraw[fill = red!30](2,0) arc (0:180:2);
\filldraw[fill = blue!30](5/3,0) arc (0:180:5/3);
\filldraw[fill = orange!30](4/3,0) arc (0:180:4/3);
\filldraw[fill = white](1,0) arc (0:180:1);
\fill[very thick, fill = red!50, domain = 47:63] (0,0) -- plot ({-.5*cos(\x)}, {.5*sin(\x)});
\fill[very thick, fill = red!50, domain = 47:63] (0,0) -- plot ({.5*cos(\x)}, {.5*sin(\x)});
\fill[very thick, fill = blue!50, domain = 65:115] (0,0) -- plot ({.5*cos(\x)}, {.5*sin(\x)});
\draw[thick, black!50](0,0) -- ({sqrt(4 - 1.5^2}, 1.5);
\draw[thick, black!50](0,0) -- ({-sqrt(4 - 1.5^2}, 1.5);
\draw[thick, black!50](0,0) -- ({sqrt((5/3)^2 - 1.5^2}, 1.5);
\draw[thick, black!50](0,0) -- ({-sqrt((5/3)^2 - 1.5^2}, 1.5);
\draw[very thick](-2.1,1.5) -- ++ (4.2,0);
\end{tikzpicture}
\vspace{3mm}

\begin{tikzpicture}[scale = 1]
\filldraw[fill = red!30](2,0) arc (0:180:2);
\filldraw[fill = blue!30](5/3,0) arc (0:180:5/3);
\filldraw[fill = orange!30](4/3,0) arc (0:180:4/3);
\filldraw[fill = white](1,0) arc (0:180:1);

\fill[very thick, fill = blue!50, domain = 43:60] (0,0) -- plot ({.5*cos(\x)}, {.5*sin(\x)});
\fill[very thick, fill = red!50, domain = 35:43] (0,0) -- plot ({.5*cos(\x)}, {.5*sin(\x)});
\fill[very thick, fill = blue!50, domain = 43:60] (0,0) -- plot ({-.5*cos(\x)}, {.5*sin(\x)});
\fill[very thick, fill = red!50, domain = 35:43] (0,0) -- plot ({-.5*cos(\x)}, {.5*sin(\x)});
\fill[fill = orange!50, domain = 60:120] (0,0) -- plot ({-.5*cos(\x)}, {.5*sin(\x)});

\draw[thick, black!50](0,0) -- ({sqrt(4 - 1.15^2}, 1.15);
\draw[thick, black!50](0,0) -- ({-sqrt(4 - 1.15^2}, 1.15);
\draw[thick, black!50](0,0) -- ({sqrt((5/3)^2 - 1.15^2}, 1.15);
\draw[thick, black!50](0,0) -- ({-sqrt((5/3)^2 - 1.15^2}, 1.15);
\draw[thick, black!50](0,0) -- ({sqrt((4/3)^2 - 1.15^2}, 1.15);
\draw[thick, black!50](0,0) -- ({-sqrt((4/3)^2 - 1.15^2}, 1.15);
\draw[very thick](-2.1,1.15) -- ++ (4.2,0);
\end{tikzpicture}
\begin{tikzpicture}[scale = 1]
\filldraw[fill = red!30](2,0) arc (0:180:2);
\filldraw[fill = blue!30](5/3,0) arc (0:180:5/3);
\filldraw[fill = orange!30](4/3,0) arc (0:180:4/3);
\filldraw[fill = white](1,0) arc (0:180:1);

\fill[very thick, fill = blue!50, domain = 30:39] (0,0) -- plot ({.5*cos(\x)}, {.5*sin(\x)});
\fill[very thick, fill = red!50, domain = 26:30] (0,0) -- plot ({.5*cos(\x)}, {.5*sin(\x)});
\fill[fill = orange!50, domain = 40:59] (0,0) -- plot ({.5*cos(\x)}, {.5*sin(\x)});
\fill[very thick, fill = blue!50, domain = 30:39] (0,0) -- plot ({-.5*cos(\x)}, {.5*sin(\x)});
\fill[very thick, fill = red!50, domain = 26:30] (0,0) -- plot ({-.5*cos(\x)}, {.5*sin(\x)});
\fill[fill = orange!50, domain = 40:59] (0,0) -- plot ({-.5*cos(\x)}, {.5*sin(\x)});

\draw[thick, black!50](0,0) -- ({sqrt(4 - .85^2}, .85);
\draw[thick, black!50](0,0) -- ({-sqrt(4 - .85^2}, .85);
\draw[thick, black!50](0,0) -- ({sqrt((5/3)^2 - .85^2}, .85);
\draw[thick, black!50](0,0) -- ({-sqrt((5/3)^2 - .85^2}, .85);
\draw[thick, black!50](0,0) -- ({sqrt((4/3)^2 - .85^2}, .85);
\draw[thick, black!50](0,0) -- ({-sqrt((4/3)^2 - .85^2}, .85);
\draw[thick, black!50](0,0) -- ({sqrt((3/3)^2 - .85^2}, .85);
\draw[thick, black!50](0,0) -- ({-sqrt((3/3)^2 - .85^2}, .85);
\draw[very thick](-2.1,.85) -- ++ (4.2,0);
\end{tikzpicture}
\begin{tikzpicture}[scale = 1]
\filldraw[fill = black!20](2,0) arc (0:180:2);
\filldraw[fill = black!25](5/3,0) arc (0:180:5/3);
\filldraw[fill = black!10](4/3,0) arc (0:180:4/3);
\filldraw[fill = white](1,0) arc (0:180:1);

\draw[black!50](0,0) -- ({sqrt(4 - 1.5^2}, 1.5);
\draw[thick](0,0) -- ({sqrt((5/3)^2 - 1.5^2}, 1.5);
\draw[red,  thick](0,0) -- (0,1.5);
\draw[red,  thick](0,0) -- ({sqrt((5/3)^2 - 1.5^2)},1.5);
\node[left, red] at(.1, .75){$x$};
\node[right, red] at(.28, .75){$r$};
\draw[red, thick] (0, 1.3) arc(270:360:.2);
\fill[red](.07,1.43) circle(.2mm);
\fill[red, very thick, domain = 63:90, samples = 30] (0,0) -- plot ({.3*cos(\x)}, {.3*sin(\x)}) -- (0,0);
\draw[ thick, red](0, 1.5) -- ({sqrt((5/3)^2 - 1.5^2)}, 1.5);
\draw[very thick](-2.1,1.5) -- ++ (4.2,0);
\end{tikzpicture}

\caption{\label{figure angles situations}
As the distance of the straight line from the origin varies, the sets $A, B, C$ as in Figure \ref{figure angles} can be empty or have one or two connected components, depending on which annuli the line intersects. The four regimes where the line intersects at least one annulus can also be noted in the second line of Figure \ref{figure angles}. The final plot illustrates the auxiliary triangle by which we obtain expressions for the volume of $A, B,C$, with the solid red angle being precisely $\cos^{-1}(x/r)$.
}
\end{figure}

{\bf Step 1.} First, when $b \leq x,$ we have
$0 = |B| \leq 6 (|A|+|C|)$, which immediately settles two cases in Figure \ref{figure angles situations}. Now, we consider the case where $a \leq x<b.$ Forming an auxiliary triangle with the $y$-axis as in Figure \ref{figure angles situations} and using symmetry, we find that
\begin{align*}
\frac{|B|}{|A|+|C|} = \frac{|B|}{|A|} = \frac{ \cos^{-1}(\frac{x}{b}) }{ \cos^{-1}(\frac{x}{R}) - \cos^{-1}(\frac{x}{b})} &=  \frac{ 1 }{ \frac{ \cos^{-1}(\frac{x}{R})}{\cos^{-1}(\frac{x}{b})} - 1 } = \frac{1}{k(x) - 1}, 
\end{align*}
where
\[
k(x) :=  \frac{ \cos^{-1}(\frac{x}{R})}{\cos^{-1}(\frac{x}{b})}.
\]
We now show that $k$ is increasing. Its derivative satisfies
\[
k'(x) = \frac{  \sqrt{R^2 - x^2} \cos^{-1}\left(\frac{x}{R}\right) - 
\sqrt{b^2 - x^2} \cos^{-1}\left(\frac{x}{b}\right)}{\sqrt{b^2 - x^2} \sqrt{R^2 - x^2}\left(\cos^{-1}\left(\frac{x}{b}\right)\right)^2}.
\]
It suffices to show that:
\begin{equation}\label{eq:k}
\sqrt{R^2 - x^2} \cos^{-1}\left(\frac{x}{R}\right) - 
\sqrt{b^2 - x^2} \cos^{-1}\left(\frac{x}{b}\right) >0.
\end{equation}
To prove this we claim that for a fixed $0<a<x<b,$ the following function is increasing for $b \leq z$:
\[
F_x(z) = \sqrt{z^2 - x^2} \cos^{-1}\left(\frac{x}{z}\right).
\]
We note that we can rewrite \eqref{eq:k} as $F_x(R) -F_x(b) > 0.$ 
We have
\[
F_x'(z) = \frac{x}{z} +
\frac{z \cos^{-1} (\frac{x}{z}) }{\sqrt{z^2 - x^2}}  >0,
\]
which shows that $F_x(z)$ is increasing and $F_x(R) > F_x(b)$. 
So far we have shown that for $a<x<b,$ $k(x) > k(a),$ thus:
\[
\frac{|B|}{|A|+|C|} \leq \frac{1}{k(a) - 1} = \frac{\cos^{-1}(\frac{a}{b})}{ \cos^{-1}(\frac{a}{R}) - \cos^{-1}(\frac{a}{b})}. 
\]
We will return to this expression below.

{\bf Step 2.} Now, suppose that $0<x<a$. Again, this step encompasses two of the five cases in Figure \ref{figure angles situations}. As above, we observe that
\begin{align*}
|B| &= \cos^{-1}\left(\frac{x}{b}\right) - \cos^{-1}\left(\frac{x}{a}\right)\\
|A| &= \cos^{-1}\left(\frac{x}{R}\right) - \cos^{-1}\left(\frac{x}{b}\right).
\end{align*}
Therefore
we note that
\begin{align*}
\frac{|B|}{|A| + |C|} &\leq \frac{|B|}{|A|}
 \leq \frac{\cos^{-1}(\frac{x}{b}) - \cos^{-1}(\frac{x}{a})}{ \cos^{-1}(\frac{x}{R}) - \cos^{-1}(\frac{x}{b})}.
\end{align*}
Thus, we define
\[
h(x):=\frac{\cos^{-1}(\frac{x}{b}) - \cos^{-1}(\frac{x}{a})}{ \cos^{-1}(\frac{x}{R}) - \cos^{-1}(\frac{x}{b})},
\]
and note that it suffices to bound $h$ from above by 6.
To this end, we show that $h$ is an increasing function  and thus $h(x)\leq h(a),$ for $x \in (0,a)$. We write $h(x)$ as
$h(x) = \frac{f(x)}{g(x)}$ and using  L'Hopital's rule for monotonicity \cite{estrada2017hopital}, we note that $f(0) = g(0) =0$ and we show that $l(x) = \frac{f'(x)}{g'(x)}$ is an increasing function. 
We have:
\[
l(x) = 
\frac{f'(x)}{g'(x)} = 
\frac{\frac{1}{\sqrt{a^2-x^2}} -  \frac{1}{\sqrt{b^2-x^2}} } { \frac{1}{\sqrt{b^2-x^2}} - \frac{1}{\sqrt{R^2-x^2}}   }.
\]
To show that $l$ is an increasing function we now need to show that $l'(x) > 0.$ After simplification, we see that:
\begin{align*}
l'(x) &= 
x\left(\sqrt{b^2-x^2}-\sqrt{a^2-x^2}\right) \left(\sqrt{R^2-x^2}-\sqrt{a^2-x^2}\right) \\
&\hspace{2cm}\cdot \frac{\sqrt{a^2-x^2}\sqrt{b^2-x^2}+\sqrt{a^2-x^2}\sqrt{R^2-x^2}+\sqrt{b^2-x^2}\sqrt{R^2-x^2}}{\left(a^2-x^2\right)^{3/2}\sqrt{b^2-x^2}\sqrt{R^2-x^2} \left(\sqrt{R^2-x^2} - \sqrt{b^2-x^2}\right)}.
\end{align*}
So we see that $l'(x)>0$ for $x \in (0,r).$ Using L'Hopital's rule for monotonicity \cite{estrada2017hopital}, we conclude that both $l$ and $h$ are increasing in $(0,a)$. In particular, $h$ attains its supremum at $x = a$ where
\[
\frac{|B|}{|A|} = h(a) = \frac{\cos^{-1}(a/b) - \cos^{-1}(1)}{\cos^{-1}(a/R) - \cos^{-1}(a/b)} = \frac{\cos^{-1}(a/b)}{\cos^{-1}(a/R) - \cos^{-1}(a/b)}
\]
as in Step 1.

{\bf Step 3.} Finally, it remains to bound the shared expression for the maximum. Let $\alpha=\frac{R}{r}>1$, then $R=\alpha r$, and we have
\begin{align*}
\frac{\cos^{-1}(\frac{a}{b})}{ \cos^{-1}(\frac{a}{R}) - \cos^{-1}(\frac{a}{b})} = \frac{
\cos^{-1}\left(\frac{2+\alpha}{1+2\alpha}\right)
}{
\cos^{-1}\left(\frac{2+\alpha}{3\alpha}\right)
-
\cos^{-1}\left(\frac{2+\alpha}{1+2\alpha}\right)
}.
\end{align*}
Since $ \cos^{-1}\left(\frac{2+\alpha}{1+2\alpha}\right) = \cos^{-1}\left(\frac{2+\alpha}{3\alpha}\right)-\cos^{-1}\left(\frac{2+\alpha}{1+2\alpha}\right) =0 $ at $\alpha =1,$ we can apply L'Hopital's rule for monotonicity and observe that:
\[
\frac{\left(\cos^{-1}\left(\frac{2+\alpha}{1+2\alpha}\right)\right)'}{\left( \cos^{-1}\left(\frac{2+\alpha}{3\alpha}\right)-\cos^{-1}\left(\frac{2+\alpha}{1+2\alpha}\right) \right)'} =   \frac{3\alpha}{\sqrt{6\alpha^2+9\alpha+3}-3\alpha}, 
\]
and
\[
\frac{d}{d \alpha}  \frac{3\alpha}{\sqrt{6\alpha^2+9\alpha+3}-3\alpha} =
\frac{3(3\alpha+2)}{2\sqrt{\frac{2\alpha^2}{3}+\alpha+\frac{1}{3}}\left(\sqrt{6\alpha^2+9\alpha+3}-3\alpha\right)^2} >0.
\]
Therefore the supremum occurs at infinity (or $\alpha = R/r$ as large as admitted) and we have:
\begin{align*}
\sup_{\alpha \in (1,\infty)}
\frac{
\cos^{-1}\left(\frac{2+\alpha}{1+2\alpha}\right)
}{\cos^{-1}\left(\frac{2+\alpha}{3\alpha}\right)-\cos^{-1}\left(\frac{2+\alpha}{1+2\alpha}\right)}
    &= \lim_{\alpha \to \infty} \frac{
\cos^{-1}\left(\frac{2+\alpha}{1+2\alpha}\right)
}{\cos^{-1}\left(\frac{2+\alpha}{3\alpha}\right)-\cos^{-1}\left(\frac{2+\alpha}{1+2\alpha}\right)} \\
    &= \frac{\cos^{-1}\left(1/2\right)}{\cos^{-1}\left(1/3\right) - \cos^{-1}\left(1/2\right)}  \\
    &= \frac{\pi/3}{\cos^{-1}(1/3) - \pi/3} \approx 5.7 < 6.
\end{align*}
If $R/r\leq 3/2$, we evaluate the function numerically for an estimate.
\end{proof}

In the previous Lemma, we considered a single subspace $L$ and the quotient of the measures of the set of `good' angles (angles where $L\cap (B_R\setminus B_r)$ lies in the central sub-annulus) and the set of `guaranteed bad' angles (angles where $L\cap B_R\setminus B_r$ lies in one of the other two sub-annuli). Other angles for which $L$ did not intersect the annulus $B_R\setminus B_r$ were not considered at all since those could be bad for the subspace $L$, but could be `cured' by another line $L'$. In the next lemma, we extend the result to a finite union of subspaces.

\begin{proof}[Proof of Lemma \ref{lemma multiple lines}]
We can modify the collection of lines $\mathcal L$ in any way which does not decrease the `good set'. This allows us to drop all lines from $\mathcal L$ which do not intersect $B_b\setminus B_a$ as they can only decrease the good set, not increase it. Similarly, we may drop all lines passing directly through the origin. As $\mathcal L$ is locally finite, this leaves us a finite collection of lines $\mathcal L' = \{\ell_1, \dots, \ell_N\}$.

Again, we consider the projection $\pi:\R^2\to S^1$ and for a line $\ell$, we denote 
\[
G(\ell) = \pi\big(\ell \cap (B_b\setminus B_a)\big), \qquad I(\ell) = \pi\big(\ell\cap (B_R\setminus B_r)\big), \qquad B(\ell) = I(\ell)\setminus G(\ell)
\]
and note that
\[
G(\L) \subseteq G(\L') = \bigcup_{i=1}^N \left(G(\ell_i) \setminus \bigcup_{j\neq i} I(\ell_j)\right).
\]
As can be seen e.g.\ in Figure \ref{figure angles situations}, the set $I(\ell)$ may consist of one or two connected components (intervals) on the circle. For greater flexibility, we `decouple' the connected components associated to the same line and note that for the purpose of proof we can modify individual intervals rather than lines in a way that increases the good set. By symmetry exploited in the proof of Lemma \ref{lemma angles}, the same bounds apply to the individual intervals. Specifically, assume from now on that we are given a finite collection $\mathcal I$ of intervals $I_i, G_i$ in $S^1$ such that $I_i = G_i \bigcup B_i$ in $S^1$ such that $|G_i| \leq \zeta |B_i|$ for some universal $\zeta$. We define the good set of the collection analogously
\[
G(\mathcal I) = \bigcup_{i=1}^M \left(G_i \setminus \bigcup_{j\neq i} I_j\right). 
\]

Obviously, if $G_i \subseteq \bigcup_{j\neq i} I(\ell_j)$, then $I_i$ does not actually contribute to $G(\I)$ and we can remove $I_i$ from the collection $\I$. As $\I$ is finite, after a finite number of steps this yields a sub-collection $\I'$ such that for every $I_i\in \I'$, there exists $\theta_i \in G_i \setminus \bigcup_{j\in \I', j\neq i} I_j$, i.e.\ every interval has a good point which is not contained in any other interval.

Since the sets $I_i$ are intervals, this means that any $\theta\in S^1$ can lie in at most two different intervals: One coming from the right and one coming from the left. This is easy to see on $\mathbb R$: If $x\in I_1 \cap I_2\cap I_3 =  (a_1, b_1)\cap  (a_2, b_2) \cap (a_3, b_3)$ and without loss of generality $a_1 = \min a_i$, $b_2 = \max b_i$, then $(a_3,b_3)\subseteq (a_1, b_2) = (a_1,b_1)\cup (a_2,b_2)$ since the intervals share a point. Thus, the collection would violate the `unique point' property. The intervals in $S^1$ lift to proper intervals in $\mathbb R$ since they have length $<\pi$ (see Figure \ref{figure angles situations}) and the lifting can only underestimate overlap, not overestimate it, so the same statement applies.  
We conclude that
\begin{align*}
\H^1(G(\I')) &= \H^1\left(\bigcup_i\left(G_i \setminus \bigcup_{j\neq i} I_j\right)\right)
    \leq \sum_i \H^1\big(G_i\big)
    \leq \zeta \sum_i \H^1(B_i)\\
    &= \zeta \int_{S^1} \sum_i \chi_{B_i} \d\H^1 \leq \zeta \int_{S^1} 2\,\chi_{\bigcup_i B_i}\d\H^1 
    \leq 2\zeta \,\H^1\left(\bigcup_{i} B_i\right)
    \leq 2\zeta\,\H^1\big(B(\mathcal I')\big).
\end{align*}
Here we used the monotonicity and sub-additivity of measures, the fact that a point is counted at most twice in the union and the fact that $\bigcup_{i} B_i\subseteq B(\I')$. Hence
\[
\H^1(B(\I')) = 2\pi - \H^1(G(\I')) \geq 2\pi - 2\zeta\,\H^1(B(\I')) 
\]
which finally yields
\[
\H^1(B(\L)) \geq \H^1(B(\L')) \geq \H^1(B(\L')) \geq \frac{2\pi}{2\zeta+1}.
\]
The claim follows using $\zeta = 6$ in general and $\zeta = 3$ if $R/r\leq 3/2$ as per Lemma \ref{lemma angles}.
\end{proof}

\subsection{Proof of Main Result}
We conclude by establishing an energy gap between the class of surfaces modeled by graphs of Barron functions and more general graph surfaces. 

\begin{proof}[Proof of Theorem \ref{theorem bending}]
{\bf Step 1: Radially symmetric functions.} From \eqref{eq hessian polar}, we find that
\[
\|D^2 u\|^2 = u_{rr}^2 + \frac{|u_r|^2}{r^2}
\]
for radially symmetric functions and $\|D^2u\|^2 \geq u_{rr}^2$ for all functions.
Similarly, we have $\nabla u = u_r \,e_r + u_\phi \,e_\phi$ and thus
\[
\| \nabla u \otimes \nabla u - g \|^2 = \| \nabla u \otimes \nabla u - e_r \otimes e_r \|^2 \geq  \big|u_r^2 -1\big|^2
\]
since $e_r\otimes e_\phi, e_r\otimes e_r$ and $e_\phi\otimes e_\phi$ are orthonormal with respect to the Frobenius inner product. The inequality is an equality if and only if $u_\phi = 0$, i.e.\ it holds as an equality precisely for radially symmetric functions.
We conclude that
\[
E_{\mu,\lambda,g}(u) = \int_{\Omega \setminus K} |u_{rr}|^2 + \frac{|u_r|^2}{r^2} \dx + \mu \int_\Omega \big|u_r^2 -1\big|^2 \dx +  \lambda \,\H^1(\mathcal K)
\]
for radially symmetric functions $u$. Radially symmetric functions of finite energy have the form $u(x) = v(|x|)$ where $v\in H^1(r,R)$ and $v\in H^2$ except at a finite set of radii $K_v$, i.e.\
\[
E_{\mu,\lambda,g}(u) = 2\pi \left(\int_{(r,R)\setminus K_v} \left(|v''|^2 + \frac{|v'|^2}{s^2}\right)s\ds + \mu \int_r^R\big|(v')^2-1\big|^2s\ds + \lambda \sum_{s\in K_v}s \right)= 2\pi\,F(v, K_v)
\]
where $F$ is as in Lemma \ref{lemma slicing}, with the same parameters $\lambda, \mu$ as in $E_{\mu,\lambda,g}$ and thus
\[
\inf_{(u, K) \in \mathcal G} E_{\mu,\lambda,g}(u,K) \leq 2\pi\,\inf_{(v, K)\in \mathcal G_1} F(v,K).
\]
The inequality may be strict if minimizers break the radial symmetry, which may happen for the non-convex energy functional.

{\bf Step 2. Barron functions.} Analogous to Step 1, we note that
\begin{align*}
E_{\mu,\lambda,g}(u, K) &\geq \int_{\Omega \setminus K} |u_{rr}|^2 \dx + \mu \int_\Omega \big|u_r^2 -1\big|^2 \dx +  \lambda \,\H^1( K)
\end{align*}
for arbitrary pairs $(u,K)$.
Given an angle $\theta$, we define
\[
K_\theta := \left\{\rho \in (r,R) : \rho(\cos\theta,\sin\theta) \in K \right\},
\qquad
u^\theta : (r,R) \to \mathbb{R}, \qquad  u^\theta(\rho) := u(\rho\cos\theta,\rho\sin\theta)
\]
such that $u_r = (u^\theta)'$ and $u_{rr} = (u^\theta)''$. 

Assume that $L$ is an affine line which intersects a disk $B_R(0)$ at a distance $d>0$. Without loss of generality, we assume that $L = \{(x,y) : y=d\}$ and parametrize the line by $x(\theta) = d\, (\tan \theta, 1)$ with $\theta \in (\theta_1,\theta_2)$ such that the angle $x(\theta)$ makes with the positive $y$-axis is $\theta$. Then
\begin{align*}
\H^1(L\cap \Omega) &= \int_{\theta_1}^{\theta_2} \big|x'(\theta)\big| \d\theta = \int_{\theta_1}^{\theta_2} d\,\sqrt{\big(\tan'\theta\big)^2+0}\d\theta = \int_{\theta_1}^{\theta_2} d\,\big(1+\tan^2\theta\big)\d\theta\\
    &= \int_{\theta_1}^{\theta_2} \frac{|x(\theta)|^2}{d}\d\theta\geq \int_{\theta_1}^{\theta_2} \frac{|x(\theta)|^2}R\d\theta = \int_0^{2\pi} \sum_{s\in (L\cap B_R)_\theta} \frac{s^2}R\d\theta
\end{align*}
The final formulation naturally applies to annular domains as well as countable unions of lines. Thus, if $u$ is general and $K$ is a countable union of lines, we have
\begin{align*}
E_{\mu,\lambda,g}(u,K) &\geq \int_{0}^{2\pi} \int_{(r,R)\setminus K_\theta}\big|(u^\theta)''\big|^2 s + \mu\,\Big|\big((u^\theta)'\big)^2 -1\Big|^2 s \ds + \lambda\sum_{s\in K_\theta} \frac{s^2}R\d\theta\\
    &= \int_0^{2\pi} \widetilde F\big(u^\theta, K_\theta)\d\theta\\
    &= \int_{G(K)} \widetilde F\big(u^\theta, K_\theta)\d\theta + \int_{B(K)} \widetilde F\big(u^\theta, K_\theta)\d\theta
\end{align*}
where $G(K), B(K)$ denote the set of good and bad angles of the union of lines defining $K$. By Lemma \ref{lemma slicing} and the first step of the proof, we have
\begin{align*}
E_{\mu,\lambda,g}(u,K) &\geq \H^1\big(B(K)\big) \big(\inf F + 1\big) + \H^1\big(G(K)\big) \left(\inf F - \frac1{7}\right)\\
    &= 2\pi \,\inf F + \left(\H^1(B(K)) - \frac{\H^1(G(K))}{7}\right)\\
    &\geq \inf_{(u,K)\in \mathcal G_1}E(u,K) + \left(\H^1(B(K)) - \frac{\H^1(G(K))}{7}\right).
\end{align*}
Finally, we use Lemma \ref{lemma multiple lines} to conclude that
\[
E_{\mu,\lambda,g}(u,K) \geq \inf_{(u,K)\in \mathcal G_1}E(u,K) + 2\pi\left(\frac{1}{7} - \frac{6}{7}\frac{1}{7}\right) = \inf_{(u,K)\in \mathcal G_1}E(u,K) + \frac{2\pi}{7^2}.
\]
The proof was formulated assuming that $K$ is a union of affine lines. While this is not guaranteed for finite energy pairs $(u,K)$ with $u\in \B$, if $u\in \mathcal B\cap SBV^2$, then we can replace $K$ by a union of affine lines $K'$ with potentially lower measure as in Theorem \ref{theorem structure barron sbv2}. As this only decreases the energy, we can conclude the proof.
\end{proof}

\section{Acknowledgements}

The authors gratefully acknowledge financial support by grant NSF DMS 2424801.

\appendix 
\section{On the structure of Barron functions}\label{appendix barron structure}

We begin by proving that the distributional Hessian of a Barron function is well-defined and a measure by showing that the partial derivatives of Barron functions are functions of bounded variation.

\begin{proof}[Proof of Lemma \ref{lemma bounded variation gradient}]
If $u\in \B$, there exists a sequence $u_n(x) = \sum_{k=1}^n a_k \sigma(w_k\cdot x+b_k)$ such that
\begin{enumerate}
    \item $\sum_{k=1}^n |a_k| \,\|w_k\|_2 \leq \|u\|_\B$ and
    \item $u_n\to u$ in $L^1(B_R(0))$
\end{enumerate}
by the direct approximation theorem for Barron functions (see e.g.\ \cite[Theorem 3.8]{barron_new}). We note that
\[
D^2 u_n = \sum_{k=1}^n a_k \,\frac{w_k\otimes w_k}{\|w_k\|}\,\H^{n-1}|_{\{x: w\cdot x+b = 0\}}
\]
and thus 
\begin{align*}
|D^2u_n|(B_R(0)) &= \sum_{k=0}^n |a_k| \,\|w_k\|\,\H^{d-1}\big(\{x: w_k\cdot x+b_k = 0\}\cap B_R(0)\}\big)\\
    &\leq \omega_{d-1}R^{d-1}\sum_{k=0}^n|a_k| \,\|w_k\| \leq \omega_{d-1}R^{d-1} \|u\|_\B.
\end{align*}
By \cite[Theorem 1.9]{giusti1984minimal}, the inequality survives in the limit $n\to\infty$.
\end{proof}

Key to this observation is that the Hessian grows {\em linearly} in $w$ in the expression above, not quadratically as one might naively expect. This is easy to see in one dimension by the positive one-homogeneity 
\[
\frac{d^2}{dx^2} \sigma(wx) = \frac{d^2}{dx^2}\, w\sigma(x)  = w\,\sigma''= w\,\delta_0
\]
or by smooth approximation $\sigma_\eps(wx)$ where $w^2\sigma_\eps''(wx) \to w\,\delta_0$ since a factor of $w$ is lost to a change of variables under the integral.

Throughout the proof of Theorem \ref{theorem structure barron sbv2}, we use the same notations and ideas as in \cite{barron_new}. Specifically, we use that $f$ can be represented as
\[
f(x) = \int_{S^2} \sigma\big(w\cdot (x,1)\big) \d\mu_{w}
\]
for a signed finite Borel measure $\mu$ on $S^2\subseteq \R^3$ such that $\|\mu\|_{TV} = \|f\|_\B$.

\begin{proof}[Proof of Theorem \ref{theorem structure barron sbv2}]
{\bf Step 1. Barron functions and positive homogeneity.} In this step, we show that a general Barron function can be written as a sum of simpler Barron functions, which we can further analyze assuming that the regular part of the Hessian is $L^2$-integrable.

Since the total variation measure $|\mu|$ is finite, there exist at most countably many atoms $w_1, w_2, \dots$ of $|\mu|$ since the set of atoms $\mathcal A$ satisfies
\[
\mathcal A = \bigcup_{k\in\mathbb N_0} \left\{x \in S^2 : |\mu(x)| \geq \frac1k\right\}
\]
and every one of the sets in the union must be finite by the bound on $|\mu(S^2)|$. Thus $\mu = \tilde \mu + \sum_{i=1}^\infty m_i\,\delta_{w_i}$ for some $m_i\in \R$ and distinct $w_i$ such that the remainder measure $|\tilde\mu|$ has no atoms and $\|\mu\| = \|\tilde\mu\| + \sum_{i=1}^\infty|m_i|$. 

We can further decompose $\tilde\mu = \mu^* + \sum_{i=1}^\infty \tilde\mu_i$ where $\tilde\mu_i$ is a measure supported on a great circle in $S^2$. The measure $|\mu^*|$ does not give mass to any great circle in $S^2$. This can be proved much like the previous result, using that
\[
|\tilde \mu|(S^2) \geq |\tilde\mu|\left(\bigcup_{j=1}^\infty C_j\right) = \sum_{j=1}^\infty |\tilde\mu|(C_j)
\]
for any collection of great circles $C_1, C_2, \dots$ in $S^2$ since $|\tilde\mu|(C_j\cap C_k) = 0$ for any two distinct great circles $C_j, C_k$ in $S^2$ since $C_j\cap C_k$ is finite and $|\tilde\mu|$ has no atoms.

We consider the different parts of the decomposition separately. The functions
\begin{align*}
f_i(z) &= \int_{S^2}\sigma(w\cdot z)\d (m_i\delta_{w_i}) = m_i\,\sigma(w_i\cdot z)
\end{align*}
are single neuron activations. They are smooth except on the plane $\langle w_i\rangle^\bot = \{z: w_i\cdot z= 0\}$ which intersects the `physical' plane $\{z = (x,1) : x\in\R^2\}$ in a line (or not at all). The Hessian $D^2f_i$ concentrates on the line and vanishes everywhere else. 

On the other hand, if $C_i$ denotes the circle which is given positive mass by $|\tilde\mu_i|$, then the functions $\tilde f_i(z) = \int_{S^2}\sigma(w\cdot z) \d\tilde\mu_i$ are $C^1$-smooth with gradient
\[
\nabla \tilde f_i(z) = \int_{S^2} \chi_{\{w: w\cdot z>0\}}\,w \d\tilde\mu_i
    = \int_{\{w\in S^2 : w\cdot z>0\}} w\d\tilde\mu_i
\]
except on the line $L_i$ which is orthogonal to the line $C_i$ lies in.   In particular, if the functions $\tilde f_i$ are piecewise linear in $\R^3$, they are in fact linear. The line $L_i$ intersects the `physical plane' in a single point $(x_i, 1)$ (or not at all) and the function $\tilde f_i(x-x_0)$ is positively homogeneous of degree one and $C^1$-smooth except at the origin.

{\bf Step 2. Blow-up analysis.}
We note that for any $x_0\in \R^2$, the dominated convergence theorem implies that
\begin{align*}
\lim_{\lambda\searrow 0}\frac{f(x_0+\lambda x) - f(x_0)}\lambda &= \lim_{\lambda\searrow 0} \int_{S^2} \frac{\sigma\big(w\cdot(x_0,1) +  \lambda\,w\cdot(x,0)\big) - \sigma(w\cdot(x_0,1))}\lambda \d\mu\\
    &= \int_{S^2} \lim_{\lambda\searrow 0}\frac{\sigma\big(w\cdot(x_0,1) +  \lambda\,w\cdot(x,0)\big) - \sigma(w\cdot(x_0,1))}\lambda \d\mu\\
    &= \int_{\{w\in S^2 : w\cdot(x_0,1) > 0\}} \frac{\lambda \,w\cdot(x,0)}\lambda\d\mu + \int_{\{w \in S^2 : w\cdot(x_0,1) = 0\}} \sigma\big(w\cdot(x,0)\big)\d\mu\\
    &= \left(\int_{\{w\in S^2 : w\cdot(x_0,1) > 0\}} w\lambda\d\mu\right)\cdot \begin{pmatrix}x\\0\end{pmatrix} + \int_{\{w \in S^2 : w\cdot(x_0,1) = 0\}} \sigma\big(w\cdot(x,0)\big)\d\mu.
\end{align*}
with dominating function $w\mapsto \|w\|\,\|x\| = \|x\|$.
The convergence holds pointwise and in $L^2(\Omega)$, again by the dominated convergence theorem (with dominating function $\|\mu\|_{TV}\|x\|$). The first term describes a linear function, the second a positively one-homogeneous Barron function may be non-zero if and only if $|\mu|$ assigns positive mass to the great circle $\{w\in S^2: w\cdot (x_0,1) = 0\}$.

Note that the absolutely continuous part of the Hessian of a positively one-homogeneous function is positively homogeneous of degree $-1$, so in two dimensions, it {\em cannot} be square integrable unless it vanishes almost everywhere. This is the case only for piecewise linear functions.

If the limiting positively one-homogenous function fails to lie in $H^2(B_r(0))$, then clearly $\|D^2_x(f(x_0+\lambda x) - f(x_0))/\lambda\|_{L^2(B_r(0))}$ must diverge: If it remained bounded, then the norm $\|v\|_{L^2(B_r)} + \|D^2v\|_{L^2(B_r)}$, which is equivalent to the standard $H^2$-norm (see \cite[Aufgabe 6.4]{dobrowolski2010angewandte} or use \cite[Theorem 9.1]{gilbarg2015elliptic}) would remain bounded as well. This would mean that the limit is attained not just $L^2$-strongly but also $H^2$-weakly and thus that the limit would lie in $H^2$.

Due to two-dimensional scaling, this means that the Barron function cannot have a blow-up that is not piecewise linear at any point. More specifically, the absolutely continuous part of the Hessian transforms as
\[
u_\lambda(x) = \frac1\lambda\,u(\lambda x), \qquad \nabla u_\lambda(x) = (\nabla u)(\lambda x), \qquad D^2 u_\lambda(x) = \lambda\,(D^2u)(\lambda x)
\]
and so 
\[
\int_{B_r} \|D^2u_\lambda\|^2\dx = \int_{B_r}\|D^2u\|^2 (\lambda x) \,\lambda^2 \dx = \int_{B_{\lambda r}} \|D^2 u\|^2\dx. 
\]
Thus, if $\lim_{\lambda\searrow0}\frac{f(x_0+\lambda x) - f(x_0)}\lambda$ converges to a positively one-homogeneous function which is {\em not} merely a finite sum of ReLU activations and a linear function, then $u\notin SBV^2_{loc}(\R^2)$. Thus $\tilde f_i$ is linear for all $i$ (and could be represented e.g.\ by a sum of two ReLU activations).

Thus, we see that inside $\Omega$, we have $f(x) = f^*(x) + \sum_{i=1}^\infty m_i\sigma\big(w_i\cdot (x,1)\big)$ with $f^*\in H^2(\Omega)$. The function $f^*$ is solely responsible for the absolutely continuous part of the Hessian and the minimal set $K'$ on which the measure part of $D^2f$ concentrates is $\bigcup_{\{i \in \mathbb N : m_i \neq 0\}} \{w_i\cdot(x,1) =0\}\cap \Omega\}$.

Since $\Omega$ is open, for every $x\in \Omega$ there exists $r>0$ such that $B_r(x)\subseteq\Omega$. If $L$ is any line that intersects $B_{r/2}(x)$, then $\H^1(L\cap B_{r/2}(x))\geq r/2 = \dist(\partial B_r(x), \partial B_{r/2}(x))$. Thus, there are at most a finite number of lines $\{w_i\cdot(x,1) = 0\}$ which intersect $B_{r/2}(x)$, meaning that the union of lines to $K'$ is {\em locally finite}.
\end{proof}

\bibliographystyle{alpha}
\bibliography{bibliography.bib}
\end{document}